\DeclareMathOperator{\Aut}{Aut}
\DeclareMathOperator{\Vol}{Vol}
\DeclareMathOperator{\Conf}{Conf}
\newcommand{\Diff}{\mathrm{Diff}}
\newcommand{\cg}{\widetilde{g}}
\newcommand{\cM}{\widetilde{M}}
\newcommand{\cpi}{\widetilde{\pi}}
\newcommand{\bN}{\mathbb{N}}
\newcommand{\bR}{\mathbb{R}}
\def\sideremark#1{\ifvmode\leavevmode\fi\vadjust{\vbox to0pt{\vss
 \hbox to 0pt{\hskip\hsize\hskip1em
 \vbox{\hsize3cm\tiny\raggedright\pretolerance10000
 \noindent #1\hfill}\hss}\vbox to8pt{\vfil}\vss}}}
\newcommand{\suchthat}{\mathrel{}:\mathrel{}}
\newtheorem{theorem}{Theorem}[section]
\newtheorem{proposition}[theorem]{Proposition}
\newtheorem{lemma}[theorem]{Lemma}
\newtheorem{theoremletter}{Theorem}
\newtheorem{propositionletter}{Proposition}
\newtheorem{definitionletter}{Definition}
\theoremstyle{definition}
\newtheorem{definition}[theorem]{Definition}
\newcommand{\innerthmname}{}
\theoremstyle{definition}
\def\namedlabel#1#2{\begingroup
	#2%
	\def\@currentlabel{#2}%
	\phantomsection\label{#1}\endgroup
}
\def\XXint#1#2#3{{\setbox0=\hbox{$#1{#2#3}{\int}$ }
		\vcenter{\hbox{$#2#3$ }}\kern-.6\wd0}}
\newcommand*\owedge{\mathpalette\@owedge\relax}
\newcommand*\@owedge[1]{%
	\mathbin{%
		\ooalign{%
			$#1\m@th\bigcirc$\cr
			\hidewidth$#1\m@th\wedge$\hidewidth\cr
		}%
	}%
}
\newcommand{\ud}{\mathrm{d}}
\theoremstyle{remark}
\newtheorem{remark}[theorem]{Remark}
\numberwithin{equation}{section}
\title[Nonhomothetic complete periodic metrics with constant scalar curvature]{Nonhomothetic complete periodic metrics with constant scalar curvature}  
\author[J.H. Andrade]{Jo\~{a}o H. Andrade}
\author[J.S. Case]{Jeffrey S. Case}
\author[P. Piccione]{Paolo Piccione}
\author[J. Wei]{Juncheng Wei}
\address[J.H. Andrade]{
	Institute of Mathematics and Statistics,
	University of S\~ao Paulo
	\newline\indent 
	S\~ao Paulo, SP 05508-090, Brazil}
\email{\href{mailto:andradejh@ime.usp.br}{andradejh@ime.usp.br}}
\address[J.S. Case]{
	Department of Mathematics \\
	The Pennsylvania State University
    \newline\indent
    University Park, PA 16802, USA}
\email{\href{mailto:jscase@psu.edu}{jscase@psu.edu}}
\address[P. Piccione]{
    Department of Mathematics, 
    School of Sciences, Great Bay University
    \newline\indent 
    523000, Dongguan-GD, People’s Republic of China
    \newline\indent 
    and
    \newline\indent
    School of Mathematical Sciences, Zhejiang Normal University
    \newline\indent 
    321004, Jinhua-ZJ, People’s Republic of China
    \newline\indent 
    and
    \newline\indent
    (Permanent address) Institute of Mathematics and Statistics,	University of S\~ao Paulo
    \newline\indent 
    05508-090, S\~ao Paulo-SP, Brazil}
    \email{\href{mailto:piccione@ime.usp.br}{paolo.piccione@usp.br}}
\address[J. Wei]{
	Department of Mathematics,
	The Chinese University of Hong Kong
	\newline\indent 
    Room 201, Lady Shaw Building,
	Shatin-NT, Hong Kong}
\email{\href{mailto:wei@math.cuhk.edu.hk}{wei@math.cuhk.edu.hk}}
\subjclass[2020]{Primary 58J55; Secondary 35J60, 35B09, 53C21}
\keywords{Singular Yamabe problem, Constant Scalar Curvature, Bifurcation}
\begin{document}

\begin{abstract}
We show that there are infinitely many pairwise nonhomothetic, complete, periodic metrics with constant scalar curvature that are conformal to the round metric on $S^n\setminus S^k$, where $k < \frac{n-2}{2}$. These metrics are obtained by pulling back Yamabe metrics defined on products of $S^{n-k-1}$ and compact hyperbolic $(k+1)$-manifolds. 
Our main result proves that these solutions are generically distinct up to homothety. 
The core of our argument relies on classical rigidity theorems due to Obata and Ferrand, which characterize the round sphere by its conformal group. 
\end{abstract}

\maketitle

\section{Introduction}
Periodic metrics with constant scalar curvature arise naturally on noncompact manifolds endowed with a cocompact conformal action of groups having infinite profinite completion. This fact has been exploited to obtain multiple solutions of the singular Yamabe problem on noncompact manifolds obtained by removing singular sets from closed manifolds \cites{MR3504948, MR3803113}. Such metrics are obtained by pulling back constant curvature metrics from compact quotients via finite-sheeted topological coverings. Here, multiplicity is meant in the \emph{analytical} sense, {\it i.e.}, the corresponding conformal factors are pairwise independent solutions of the Yamabe equation. This does not, in general, imply that such metrics are nonisometric. 

In this paper, we study the geometric classification of these periodic metrics on $S^n\setminus S^k$ with $k<\frac{n-2}{2}$, focusing on the extent to which they are genuinely distinct up to isometry.
Our main goal is to prove that the periodic metrics arising from inequivalent finite coverings are generically pairwise nonhomothetic. This confirms a conjecture suggested by earlier work of Bettiol, Piccione, and collaborators \cite{MR3803113}*{pag. 600} and \cite{MR3504948}*{pag. 202}, and emphasizes the contrast between analytic and geometric notions of equivalence in the moduli theory of conformal metrics with constant scalar curvature. A central ingredient in our argument is a rigidity theorem of Obata and Ferrand \cites{MR0303464,MR1371767}, which we use to control the size of the conformal automorphism groups of the manifolds' universal covering.

To make this precise, we begin by recalling the variational and conformal structure of the singular Yamabe problem on spheres.

Let $n \geqslant 3$ and $0\leqslant k<\frac{n-2}{2}$. The singular Yamabe problem on the round sphere $(S^n, g_{S^n})$ seeks complete conformally round metrics with constant scalar curvature on $S^n \setminus \Lambda$.
When $\Lambda=S^k$, one can exploit the conformal equivalence
\[
(S^n \setminus S^k, g_{S^n}) \cong (S^{n-k-1} \times {H}^{k+1}, g_{S^{n-k-1}} \oplus g_{{H}^{k+1}}),
\]
to construct solutions as pullbacks of Yamabe metrics on products
$S^{n-k-1} \times \Sigma^{k+1}$ of the sphere and a compact hyperbolic manifold.
We say that a singular Yamabe metric is periodic if it arises in this way.
This perspective connects the analysis of curvature equations on singular spaces to the topology and geometry of their compact quotients.

The first results date back to an ODE analysis by Schoen~\cite{MR994021} of $S^{n-1}$-invariant solutions of the singular Yamabe problem on $S^n\setminus S^0\simeq S^{n-1}\times\mathbb{R}$:

\begin{theoremletter}[{\cite{MR994021}}]
\label{thm:schoen}
Let $k=0$ and $n \geqslant 3$. There exist uncountably many periodic conformally round complete metrics on $S^n \setminus S^0$ with constant scalar curvature.
\end{theoremletter}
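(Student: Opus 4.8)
The plan is to specialize the stated conformal equivalence to $k=0$, which identifies $(S^n\setminus S^0,g_{S^n})$ with the round cylinder $(S^{n-1}\times\bR,\ g_{S^{n-1}}\oplus dt^2)$ (here $H^1=\bR$ carries its flat metric), and then to look for conformal metrics $g_u=u^{4/(n-2)}(g_{S^{n-1}}\oplus dt^2)$ whose conformal factor $u>0$ depends only on the cylinder variable $t$. Since the cylinder has constant scalar curvature $(n-1)(n-2)$, the condition $R_{g_u}\equiv\text{const}$ for such $u$ becomes the autonomous second-order ODE
\[
u'' - \frac{(n-2)^2}{4}\,u + c\,u^{\frac{n+2}{n-2}} = 0,\qquad c:=\frac{n-2}{4(n-1)}\,R_{g_u}>0,
\]
the classical Fowler equation underlying the Delaunay-type solutions. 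Thus the whole problem reduces to producing uncountably many geometrically distinct positive periodic solutions of this ODE, together with a completeness check for the associated metrics.

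The heart of the argument is a phase-plane analysis. The ODE has the form $u''=W'(u)$ with $W(u)=\tfrac{(n-2)^2}{8}u^2-\tfrac{c(n-2)}{2n}u^{\frac{2n}{n-2}}$, so the energy $E=\tfrac12(u')^2-W(u)$ is a first integral. On $\{u>0\}$ the potential $W$ vanishes at $0$, where it has a strict local minimum, increases to a strict local maximum at the unique positive critical point $u_\ast$ (with $u_\ast^{4/(n-2)}=\tfrac{(n-2)^2}{4c}$, $W(u_\ast)>0$, and $W''(u_\ast)=-(n-2)$), and then decreases monotonically to $-\infty$. Hence for each energy $E\in(-W(u_\ast),0)$ the level set $\{\tfrac12(u')^2=E+W(u)\}$ is a simple closed curve inside $\{u>0\}$ encircling the center $(u_\ast,0)$, and it is the orbit of a positive periodic solution $u_E$ oscillating between the two positive roots $u_-(E)<u_\ast<u_+(E)$ of $W(u)=-E$. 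I would then study the minimal period
\[
T(E)=2\int_{u_-(E)}^{u_+(E)}\frac{du}{\sqrt{2\,(E+W(u))}},
\]
showing that $E\mapsto T(E)$ is continuous on $(-W(u_\ast),0)$, that $T(E)\to 2\pi/\sqrt{\,n-2\,}$ as $E\to -W(u_\ast)^+$ (the period of the linearization at the center), and that $T(E)\to+\infty$ as $E\to 0^-$ (the turning point $u_-(E)$ runs into the rest point $u=0$ of the potential $-W$, and the transit time through the quadratic part $W(u)\sim\tfrac{(n-2)^2}{8}u^2$ near the origin diverges logarithmically). By the intermediate value theorem the range of $T$ contains the interval $\bigl(2\pi/\sqrt{n-2},\,\infty\bigr)$; selecting one solution $u_E$ of each such minimal period yields an uncountable family.

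It remains to see that each $g_{u_E}$ is a complete periodic metric and that the family is uncountable up to homothety. Periodicity in $t$ is built in. Because $0<u_-(E)\le u_E\le u_+(E)$, the conformal factor is bounded above and bounded away from $0$, so $g_{u_E}$ is bi-Lipschitz equivalent to the complete cylinder metric and is therefore complete; in particular it does not extend over $S^0$, unlike the separatrix $u=(\cosh t)^{-(n-2)/2}$, which sits at the boundary value $E=0$ and gives the (incomplete) round metric on $S^n\setminus S^0$. Finally, the quotient $u_+(E)/u_-(E)$ — the ratio between the maximum and minimum of the conformal factor, equivalently a fixed power of the ratio between the radii of the largest and smallest round spheres $S^{n-1}\times\{t\}$ inside $(S^n\setminus S^0,g_{u_E})$ — is unchanged by rescaling the metric and by the conformal automorphisms of $(S^n\setminus S^0,[g_{S^n}])$ (which act on the cylinder as the isometries $(\omega,t)\mapsto(A\omega,\pm t+s)$, $A\in O(n)$, $s\in\bR$), hence is a homothety invariant of $g_{u_E}$; since it ranges continuously from $1$ (as $E\to -W(u_\ast)^+$) to $\infty$ (as $E\to 0^-$), it assumes uncountably many values, and the metrics $g_{u_E}$ are pairwise non-homothetic.

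The step I expect to be the main obstacle is the period-function analysis: proving continuity of $E\mapsto T(E)$ and, above all, its two limiting values, since the integral defining $T(E)$ is improper at both turning points and the computation of the blow-up rate as $E\to 0^-$ requires a careful expansion of $W$ near the origin. Everything else — the conformal reduction to the Fowler equation, the qualitative shape of $W$, completeness, and the homothety invariance of $u_+/u_-$ — is routine once this is in place.
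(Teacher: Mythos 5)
Your proposal is correct and follows essentially the same route as the source the paper cites for this statement: the paper offers no proof of its own, attributing the result to Schoen's ODE analysis of $S^{n-1}$-invariant solutions on the cylinder $S^{n-1}\times\mathbb{R}$, which is exactly the Fowler/Delaunay phase-plane argument you reconstruct. Your final step (the homothety invariant $u_+(E)/u_-(E)$) in fact proves slightly more than the stated theorem requires, anticipating for $k=0$ the non-homothety conclusion that the paper's main theorem obtains by a different (Ferrand--Obata) mechanism.
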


As pointed out by Bettiol, Piccione, and Santoro~\cite{MR3504948}, Schoen's result can also be understood via bifurcation theory.
Their approach also produces infinitely many solutions to the singular Yamabe problem when $\Lambda=S^1$:

\begin{theoremletter}[{\cite{MR3504948}}]
\label{thm:BPS}
Let $k=1$ and  $n \geqslant 5$. There exist uncountably many periodic conformally round complete metrics on $S^n \setminus S^1$ with constant scalar curvature.
\end{theoremletter}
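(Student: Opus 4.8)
The plan is to reduce Theorem~\ref{thm:BPS} to a bifurcation analysis of the Yamabe equation on the products $S^{n-2}\times\Sigma$, with $\Sigma$ a compact hyperbolic surface. First I would use the conformal equivalence $(S^n\setminus S^1,g_{S^n})\cong(S^{n-2}\times H^2,g_{S^{n-2}}\oplus g_{H^2})$ recorded above. Writing $\bar g_\Sigma:=g_{S^{n-2}}\oplus g_\Sigma$ on $M_\Sigma:=S^{n-2}\times\Sigma$ for $\Sigma=H^2/\Gamma$, any metric of constant scalar curvature in the conformal class $[\bar g_\Sigma]$ pulls back, along the covering $S^{n-2}\times H^2\to M_\Sigma$, to a $\Gamma$-periodic metric of constant scalar curvature conformal to $g_{S^{n-2}}\oplus g_{H^2}$, hence to $g_{S^n}$ on $S^n\setminus S^1$. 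Since such a metric is $w^{4/(n-2)}\bar g_\Sigma$ for a smooth positive $w$ on the \emph{compact} manifold $M_\Sigma$, its pullback has conformal factor pinched between positive constants, so it is bi-Lipschitz to the complete metric $g_{S^{n-2}}\oplus g_{H^2}$ and is therefore itself complete. It thus suffices to produce, for some $\Sigma$, a continuum of pairwise distinct constant scalar curvature metrics in $[\bar g_\Sigma]$.

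Second I would observe that $\bar g_\Sigma$ itself has constant scalar curvature $R_{\bar g_\Sigma}=(n-2)(n-3)-2=(n-1)(n-4)$, which is strictly positive exactly for $n\geq 5$; so $\bar g_\Sigma$ is a Yamabe-type solution in its own conformal class, and $\Sigma\mapsto\bar g_\Sigma$ is a smooth ``trivial branch'' of such solutions parametrized by the Teichm\"uller space of $\Sigma$. The linearization of the constant scalar curvature condition at $\bar g_\Sigma$, acting on conformal factors $L^2$-orthogonal to the constants, is governed by the operator $\phi\mapsto-\Delta_{\bar g_\Sigma}\phi-(n-4)\phi$ (a routine computation, using $c_n=\tfrac{4(n-1)}{n-2}$ and $R_{\bar g_\Sigma}/(n-1)=n-4$). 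Since $n-4<n-2$, the first nonzero eigenvalue of $-\Delta_{S^{n-2}}$, only the constant mode on the spherical factor can contribute, so $\bar g_\Sigma$ is a degenerate solution precisely when $n-4\in\mathrm{spec}(-\Delta_\Sigma)$.

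Third I would arrange this degeneracy to occur at a genuine bifurcation instant. Laplace eigenvalues of compact hyperbolic surfaces depend continuously on the hyperbolic structure and sweep across every positive number as the structure is varied — pinching a simple closed geodesic drives $\nu_1(\Sigma)\to 0$, while suitably symmetric surfaces have $\nu_1$ well above $n-4$ — so, passing to surfaces of large enough genus that several eigenvalues already lie below $n-4$ (Weyl's law), one finds a real-analytic path $t\mapsto\Sigma_t$ in a Teichm\"uller space along which a \emph{simple} eigenvalue of $-\Delta_{\Sigma_t}$ crosses the value $n-4$ with nonzero speed at some $t_*$. At such an instant the Morse index of $\bar g_{\Sigma_t}$ jumps by $1$, so the variational bifurcation theorem (of Crandall--Rabinowitz / Smoller--Wasserman type, as applied to the Yamabe equation) produces an embedded $C^1$ arc $s\mapsto g_s$, $s\in(-\varepsilon,\varepsilon)$, of constant scalar curvature metrics in $[\bar g_{\Sigma_{t_*}}]$ with $g_0=\bar g_{\Sigma_{t_*}}$ and $g_s$ transverse to, hence distinct from, the trivial branch for $s\neq 0$; normalizing to unit volume makes the $g_s$ pairwise non-homothetic. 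Pulling this arc back to $S^n\setminus S^1$ yields a continuum — in particular, uncountably many — of complete, periodic, conformally round metrics of constant scalar curvature.

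The crux is the third step: certifying that a degenerate instant is an actual bifurcation instant. One must check (i) that the deformation really moves an eigenvalue of $-\Delta_{\Sigma_t}$ through $n-4$ transversally, which uses a Hadamard-type first-variation formula for hyperbolic Laplace eigenvalues together with the surjectivity of eigenvalue branches onto $(0,\infty)$; and (ii) that the crossing eigenvalue can be taken simple, so that the local solution set is exactly the trivial branch plus one transverse arc (Crandall--Rabinowitz) — simplicity holds for generic hyperbolic structures, and transversality is then achieved after an arbitrarily small further perturbation of the path within moduli space. Granting these, completeness, conformal flatness, periodicity, and pairwise distinctness of the pulled-back metrics are immediate, the last because the covering projection induces an injection on metric tensors. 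Here ``uncountably many'' is meant in the analytic sense of pairwise distinct conformal factors; promoting it to a statement up to isometry is a genuinely finer question, and is exactly the theme of the present paper.
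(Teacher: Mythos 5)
Your reduction to the compact products $S^{n-2}\times\Sigma$, the completeness argument for the pulled-back metrics, and the computation that the product metric degenerates exactly when $n-4\in\mathrm{spec}(-\Delta_\Sigma)$ (with the spherical modes ruled out because $n-2>n-4$) all match the bifurcation strategy of Bettiol--Piccione--Santoro that the present paper merely cites for Theorem~\ref{thm:BPS}. The gaps are concentrated in your third step. First, the assertion that ``suitably symmetric surfaces have $\nu_1$ well above $n-4$'' is false once $n$ is moderately large: the first nonzero Laplace eigenvalue of a closed hyperbolic surface of genus $g$ is uniformly bounded (Yang--Yau gives $\lambda_1\leq 2(g+1)/(g-1)\leq 6$, and the asymptotically sharp bound is $1/4+o(1)$), so you cannot arrange a crossing of $n-4$ by starting above it. Nor does ``several eigenvalues already lie below $n-4$'' by itself produce a crossing along a path; what is needed is that the counting function $N_t(n-4)$ is non-constant on Teichm\"uller space. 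The correct mechanism is to degenerate the surface by pinching geodesics: eigenvalues then accumulate on the continuous spectrum $[1/4,\infty)$ of the limit, so $N_t(n-4)\to\infty$ (using $n-4\geq 1>1/4$), forcing infinitely many crossings. This step is essential and is missing from your sketch.

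Second, your route to ``uncountably many'' rests on a Crandall--Rabinowitz arc, which requires a \emph{simple} eigenvalue crossing transversally. Generic simplicity of the Laplace spectrum is known for generic Riemannian metrics (Uhlenbeck), but restricting to the finite-dimensional family of hyperbolic metrics on a fixed surface is a much more delicate matter, and you cannot simply ``perturb within moduli space'' to achieve it. The cited proof avoids this entirely by using a variational bifurcation criterion: an isolated degeneracy instant at which the Morse index of the trivial branch jumps is automatically a bifurcation instant, with no simplicity or transversality hypotheses, at the cost of producing only a sequence (not an arc) of nontrivial solutions near each instant. Consequently the source of uncountability is different from what you propose: it comes from the fact that the space of hyperbolic structures on $\Sigma$ is a positive-dimensional manifold, so that bifurcation occurs at uncountably many periods $\Gamma_t$, rather than from a continuum of solutions emanating from a single degenerate point. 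Your closing caveat---that distinctness is meant analytically, in $\mathcal{M}_{\mathrm{PDE}}$, and that promoting it to nonhomothety is the subject of the present paper---is exactly right.
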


The key points here are that $S^n\setminus S^1\simeq S^{n-2}\times H^2$ is the universal cover of $S^{n-2}\times \Sigma^2$ for any compact hyperbolic surface, and there is a rich theory of hyperbolic structures on $\Sigma^2$.
Later, a complementary argument was given by Bettiol and Piccione \cite{MR3803113} based on the geometry of symmetric spaces and the topology of discrete subgroups. 
Their solutions correspond to pullbacks of Yamabe metrics on products $S^{n-k-1} \times \Sigma^{k+1}$, with $\Sigma^{k+1} = {H}^{k+1} / \Gamma$ a compact hyperbolic manifold, where $\Gamma\subset \operatorname{Isom}({H}^{k+1})$ is a cocompact lattice. 

\begin{theoremletter}[{\cite{MR3803113}}]
\label{thm:BP}
Let $k\in\mathbb{Z}_{\geq0}$ and $n \geqslant 2k+3$. 
There exist uncountably many periodic conformally round complete metrics on $S^n \setminus S^k$ with constant scalar curvature.
\end{theoremletter}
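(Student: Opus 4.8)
The plan is to reduce the construction to a solvable Yamabe problem on compact products and then to force multiplicity by passing to a tower of hyperbolic covers, following the template of \cite{MR3504948} for $k=1$ and of \cite{MR3803113} in general. First I would use the conformal equivalence $(S^n\setminus S^k,g_{S^n})\cong(X,\bar g)$, where $X=S^{n-k-1}\times H^{k+1}$ and $\bar g=g_{S^{n-k-1}}\oplus g_{H^{k+1}}$ with the two factors being the unit sphere and hyperbolic space of curvature $-1$: for a cocompact lattice $\Gamma\subset\operatorname{Isom}(H^{k+1})$ with $\Sigma=H^{k+1}/\Gamma$, a $\Gamma$-invariant metric on $X$ conformal to $\bar g$ is the same thing as a metric on $N_\Sigma:=S^{n-k-1}\times\Sigma$ conformal to $\bar g_\Sigma:=g_{S^{n-k-1}}\oplus g_\Sigma$, and the pullbacks to $X$ of metrics from $N_\Sigma$ and $N_{\Sigma'}$ with $\Gamma,\Gamma'$ non-conjugate have distinct periodicity lattices, hence are non-homothetic. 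So it is enough to produce, for an infinite family of compact hyperbolic $(k+1)$-manifolds $\Sigma$, a constant scalar curvature (CSC) metric in $[\bar g_\Sigma]$ other than $\bar g_\Sigma$ itself — and, for the uncountable conclusion, a continuum of them.

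Next I would record that $\bar g_\Sigma$ has constant scalar curvature $\operatorname{scal}_{\bar g_\Sigma}=(n-k-1)(n-k-2)-k(k+1)$, which is positive precisely when $n\geqslant 2k+3$; this is the only place the dimension hypothesis enters. Then $(N_\Sigma,\bar g_\Sigma)$ is a closed, locally conformally flat manifold of positive constant scalar curvature, and since $\pi_1(N_\Sigma)$ is infinite it is not conformally diffeomorphic to $S^n$, so by the resolution of the Yamabe problem $0<Y(N_\Sigma)<Y(S^n)$ and a Yamabe minimizer is attained in $[\bar g_\Sigma]$. Linearizing the Yamabe equation at the constant conformal factor produces the Jacobi operator $J_{\bar g_\Sigma}=\tfrac{4}{n-2}\bigl((n-1)\Delta_{\bar g_\Sigma}-\operatorname{scal}_{\bar g_\Sigma}\bigr)$, so the Morse index of $\bar g_\Sigma$ within its conformal class is the number, counted with multiplicity, of pairs $(i,j)\neq(0,0)$ with $\lambda_i(S^{n-k-1})+\mu_j(\Sigma)<\operatorname{scal}_{\bar g_\Sigma}/(n-1)$, where $\lambda_i$ and $\mu_j$ run through the Laplace spectra of the two factors. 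Since a minimizer has Morse index $0$, any element of $[\bar g_\Sigma]$ with positive Morse index is automatically a non-product CSC metric distinct from $\bar g_\Sigma$.

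To generate such unstable products I would fix $\Gamma$ with $b_1(\Sigma)>0$ (compact hyperbolic $(k+1)$-manifolds with positive first Betti number exist in every dimension, if necessary after passing to a finite cover) and a descending chain of finite-index normal subgroups $\Gamma=\Gamma_0\supset\Gamma_1\supset\cdots$ with quotients $\Sigma_\ell$ and $\Vol(\Sigma_\ell)\to\infty$. If $\operatorname{scal}_{\bar g_\Sigma}/(n-1)$ exceeds the bottom $k^2/4$ of the $L^2$-spectrum of $\Delta_{H^{k+1}}$, the number of eigenvalues of $\Delta_{\Sigma_\ell}$ below $\operatorname{scal}_{\bar g_\Sigma}/(n-1)$ grows proportionally to $\Vol(\Sigma_\ell)$, so the Morse index of $\bar g_{\Sigma_\ell}$ tends to $\infty$ along any such tower; in the remaining cases I would take $\Gamma_\ell$ to be the kernels of a fixed surjection $\Gamma\twoheadrightarrow\mathbb Z$ reduced modulo $\ell$, for which $\mu_1(\Sigma_\ell)\to 0$, so the Morse index again tends to $\infty$. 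Either way, for $\ell$ large $\bar g_{\Sigma_\ell}$ is unstable, hence is not the Yamabe minimizer $h_{\Sigma_\ell}\in[\bar g_{\Sigma_\ell}]$, and pulling the metrics $h_{\Sigma_\ell}$ back to $X$ already yields infinitely many pairwise non-homothetic periodic CSC metrics conformal to the round $S^n\setminus S^k$. To promote ``infinitely many'' to ``uncountably many'' I would produce continua of solutions: for $k=1$ the hyperbolic structure of $\Sigma$ itself ranges over the uncountable Teichm\"uller space and one bifurcates the product solution along it, which is the mechanism of \cite{MR3504948}; for $k\geqslant 2$, where Mostow rigidity removes that modulus, one argues as in \cite{MR3803113} by an equivariant bifurcation on a sufficiently deep cover, using the growth of the negative eigenspaces of $J_{\bar g_{\Sigma_\ell}}$ together with their decomposition under the deck groups $\Gamma_\ell/\Gamma_{\ell+1}$ to split off continuous branches of non-product solutions, each projecting to uncountably many metrics on $S^n\setminus S^k$.

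The hard part will be this last step. For $k=1$ Teichm\"uller theory supplies the bifurcation parameter, but for $k\geqslant 2$ one must (i) manufacture a continuous parameter where none is evident, (ii) certify that the negative directions appearing on a cover $N_{\Sigma_{\ell+1}}$ are genuinely new — that is, transform non-trivially under $\Gamma_\ell/\Gamma_{\ell+1}$ — so that the solutions obtained are not merely pullbacks of solutions from an intermediate cover, and (iii) check that the CSC metrics thus produced on $X$ are pairwise distinct. Steps (ii)--(iii), where the spectral geometry of compact hyperbolic manifolds meets the subgroup structure of cocompact lattices in $\operatorname{Isom}(H^{k+1})$, are the technical core of the argument.
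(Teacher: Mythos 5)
Your reduction to the compact Yamabe problem on $S^{n-k-1}\times\Sigma^{k+1}$ and the use of a tower of finite covers to force the Yamabe minimizer away from the product metric is exactly the mechanism behind the cited result (and behind Proposition~\ref{prop:PDE-moduli} of this paper). Two remarks on that part. First, your instability route via the Morse index of the product metric needs extra input (low eigenvalues of $\Delta_{\Sigma_\ell}$, hence either $n-2k-2>k^2/4$ or $b_1(\Sigma)>0$ after a finite cover); the argument actually used is cleaner and assumption-free: along a tower with $\Vol(\Sigma_\ell)\to\infty$ one has $\mathcal{A}(\Pi_\ell^\ast\bar g_\Sigma)=Y(N_\Sigma,[\bar g_\Sigma])\Vol(N_{\Sigma_\ell})^{2/n}\to\infty$, while $Y(N_{\Sigma_\ell},[\Pi_\ell^\ast\bar g_\Sigma])\leqslant Y(S^n,[g_{S^n}])$ by Proposition~\ref{prop:Yamabe-existence}, so for $\ell$ large the pulled-back product cannot be the minimizer. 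Second, your parenthetical claim that non-conjugate lattices automatically give non-homothetic pullbacks ``because the periodicity lattices differ'' is an overreach: a homothety of $S^{n-k-1}\times H^{k+1}$ need not intertwine the deck groups, and ruling this out is precisely the content of the present paper (via Ferrand--Obata), not of the theorem you are proving.

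The genuine gap is the final step. Theorem~\ref{thm:BP} counts solutions in the \emph{analytic} moduli space $\mathcal{M}_{\mathrm{PDE}}$, i.e.\ as distinct conformal factors, and for $k\geqslant 2$ the uncountability is \emph{not} obtained by manufacturing continuous bifurcation branches (the space of compact hyperbolic $(k+1)$-manifolds is countable, and Mostow rigidity kills the Teichm\"uller parameter). As the introduction states, ``the uncountability comes from the action of the isometry group of hyperbolic space on solutions'': given one periodic solution $u$ invariant under a cocompact lattice $\Gamma_j$, composing with isometries $\phi\in\mathrm{Isom}(H^{k+1})_0$ yields solutions $u\circ\phi$ whose stabilizer is a discrete (indeed lattice) subgroup, so the orbit is uncountable. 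These solutions are pairwise distinct in $\mathcal{M}_{\mathrm{PDE}}$ but all isometric to one another. Your proposed equivariant bifurcation for $k\geqslant 2$, which you yourself flag as the unresolved ``hard part,'' is therefore both unnecessary for the stated theorem and aimed at a strictly stronger conclusion (uncountably many \emph{nonhomothetic} metrics) that is not known: the present paper's main theorem only produces countably many homothety classes. Replacing your steps (i)--(iii) by the conformal-group-orbit argument closes the proof.
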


Despite the similarities of their statements, there is a subtle distinction between Theorems~\ref{thm:schoen} and~\ref{thm:BPS} on the one hand, and Theorem~\ref{thm:BP} on the other hand, coming from properties of compact hyperbolic manifolds (counting $S^1$ as a hyperbolic manifold).
In the first two cases, there are uncountably many solutions with pairwise distinct periods.
This comes from the facts that there are uncountably many nonisometric metrics on $S^1$ and that the space of hyperbolic structures on a compact surface of genus $g \geq 2$ is a manifold~\cite{Ratcliffe2019}.
In the latter case, it is only known that there are countably many solutions with pairwise distinct periods.
This comes from the fact that the space of compact hyperbolic $n$-manifolds is countable~\cites{Thurston1997,Wang1972}.
Instead, the uncountability comes from the action of the isometry group of hyperbolic space on solutions.

These results naturally lead to the question of how to classify the resulting solutions. To this end, it is useful to distinguish two moduli spaces:
\begin{itemize}
    \item[{\rm (i)}] The \emph{analytic moduli space} $\mathcal{M}_{\mathrm{PDE}}$ consists of conformal factors such that the associated conformal metric has constant scalar curvature, modulo constant rescalings;
    
    \item[{\rm (ii)}] The \emph{geometric moduli space} $\mathcal{M}_{\mathrm{geom}}$ consists of homothety classes of metrics with constant scalar curvature conformal to a fixed background metric $g$.
\end{itemize}
While $\mathcal{M}_{\mathrm{PDE}}$ captures the structure of the solution set to the Yamabe equation as a nonlinear PDE, the geometric moduli space $\mathcal{M}_{\mathrm{geom}}$ encodes which solutions are truly distinct up to homothety.
In the above theorems, the solutions are distinct in $\mathcal{M}_{\rm PDE}$.
We now present the precise definition of these moduli spaces.

Let us start with $(M, g)$ a smooth $n$-dimensional Riemannian manifold with $n \geqslant 3$. 
The classical Yamabe problem seeks to find a metric of constant scalar curvature in the conformal class \[[g] := \left\{u^{\frac{4}{n-2}} g : u>0 \; {\rm and} \; u \in \mathcal{C}^\infty(M) \right\}.\]
We denote by $\mathrm{CSC}(M, [g])$ the set of constant scalar curvature metrics in the conformal class $[g]$.
For any $u \in \mathcal{C}^\infty(M)$ with $u>0$, we define the conformally related metric
\[
g_u := u^{\frac{4}{n-2}} g\in[g].
\]
It is well-known (cf.\ \cite{MR888880}) that the conformal factor $u\in \mathcal{C}^\infty(M)$ scalar curvature of $g_u$ satisfies the nonlinear elliptic PDE
\begin{equation} \label{eq:Yamabe}\tag{\(\mathscr{Y}_n\)}
L_gu := -a_n \Delta_g u + R_g u = R_{g_u} u^{\frac{n+2}{n-2}} \quad {\rm in} \quad M,
\end{equation}
where $a_n=\frac{4(n-1)}{n-2}$ is a dimensional constant and $R_g, \Delta_g$ denotes the scalar curvature and the Laplace--Beltrami operator of $g$, respectively, with the convention $-\Delta_g\geqslant 0$.

The standard formulation of the Yamabe problem seeks positive solutions $u \in \mathcal{C}^\infty(M)$ to~\eqref{eq:Yamabe} such that $g_u$ has constant scalar curvature.
The operator $L_g$ on the left-hand side is the conformal Laplacian. The nonlinearity on the right-hand side has critical growth in the sense of the Sobolev embedding $W^{1,2}(M)\hookrightarrow L^{2^*}(M)$ with $2^*=\frac{2n}{n-2}$.

First, we define the analytic moduli space. 
We emphasize that this is equivalent to the set of solutions to the PDE \eqref{eq:Yamabe} modulo constant rescalings. 
\begin{definitionletter}
{\rm The \emph{analytic moduli space} of constant scalar curvature metrics in $[g]$ is the quotient
\[
\mathcal{M}_{\mathrm{PDE}}(M, [g]) := \mathrm{CSC}(M, [g]) \big/\mathbb{R}_+ \simeq\{u\in C^\infty(M) : u>0 \; {\rm and} \; u \; {\rm solves} \; \eqref{eq:Yamabe}\}\big/\mathbb{R}_+,
\]
where $\mathbb{R}_{+}$ acts by scaling.}
\end{definitionletter}

Second, we define the geometric moduli space.
This encodes metrics that are genuinely distinct up to homothety.
Here we say that two Riemannian metrics $g_1,g_2$ on $M$ are \emph{homothetic} if there is a diffeomorphism $\Phi \in \Diff(M)$ and a constant $c>0$ such that $\Phi^\ast g_1 = c^2g_2$, otherwise they are called \emph{nonhomothetic}.
\begin{definitionletter}
{\rm The {\it geometric moduli space} of constant scalar curvature metrics in $[g]$ is the quotient
\[
\mathcal{M}_{\mathrm{geom}}(M, [g]) := \mathrm{CSC}(M, [g]) \big/ \sim,
\]
where $g_1\sim g_2$ if $g_1$ and $g_2$ are homothetic.} 
\end{definitionletter}

It is not hard to see that the projection
\[
\pi \colon \mathcal{M}_{\mathrm{PDE}}(M, [g]) \longrightarrow \mathcal{M}_{\mathrm{geom}}(M, [g])
\]
is not necessarily injective: two solutions may yield homothetic metrics via a nontrivial diffeomorphism. The fibers of this map encode the failure of the analytic moduli space to resolve geometric equivalence.

For example, in the classical case of the standard round sphere $(S^n, g_{S^n})$, Obata’s theorem \cite{MR0303464} shows that every solution to the Yamabe problem in the conformal class $[g_{S^n}]$ is isometric to $g_{S^n}$. In that case, $\mathcal{M}_{\mathrm{PDE}}$ is a homogeneous space 
\[
\mathcal{M}_{\mathrm{PDE}}(S^n,[g_{S^n}]) \cong \operatorname{Conf}(S^n) / \operatorname{Isom}(S^n) \cong \frac{SO(n+1,1)_0}{SO(n+1)}.
\]
of positive dimension, whereas $\mathcal{M}_{\mathrm{geom}}=\{[g_{S^n}]\}$ is a single point.

Now, we consider the singular Yamabe problem on $S^n$ with singular set $\Lambda=S^k$ modulo scaling by constants.
As before, the analytic moduli space is characterized as the set of positive smooth solutions to the singular Yamabe equation 
\begin{equation} \label{eq:Yamabesingular}\tag{\(\mathscr{Y}^*_{n,k}\)}
\begin{cases}
-a_n \Delta_g u + R_g u = (n-1)(n-2k-2) u^{\frac{n+2}{n-2}} \quad {\rm in} \quad {S}^n\setminus {S}^k,\\
\lim_{\ud_g(x,{S}^k)\to 0}u(x)=\infty.
\end{cases}
\end{equation}
In other words, one has
\[
\mathcal{M}_{\mathrm{PDE}}({S}^n\setminus {S}^k):=\mathcal{M}_{\mathrm{PDE}}({S}^n\setminus {S}^k, [g_{{S}^n}]):=\{u\in C^\infty_+(S^n\setminus S^k) : u \; {\rm solves} \; \eqref{eq:Yamabesingular}\}.
\]
Our normalization is such that the product metric on $S^{n-k-1} \times H^{k+1} \cong S^n \setminus S^k$ solves~\eqref{eq:Yamabesingular}.

The constructions in Theorems \ref{thm:schoen}, \ref{thm:BPS}, and \ref{thm:BP} provide large families of analytic solutions; {\it i.e.} 
\[
\#\mathcal{M}_{\mathrm{PDE}}({S}^n\setminus {S}^k) \gtrsim \aleph_0.
\]
This raises a natural question: {\it Do these distinct analytic solutions correspond to distinct solutions in $\mathcal{M}_{\rm geom}$?}

The purpose of this note is to give a positive answer. 
Our main result is that the periodic solutions to the singular Yamabe problem obtained via a tower of finite-sheeted topological coverings in \cites{MR3803113,MR3504948} are, after passing to a subtower if necessary, pairwise nonhomothetic. 
In particular, this confirms that the geometric moduli space is infinite, in sharp contrast with the rigidity on the round sphere:
\begin{theorem}
\label{thm:main}
Let $k\in\mathbb{Z}_{\geq0}$ and $n \geqslant 2k+3$. There exist countably many pairwise nonhomothetic periodic conformally round complete metrics on $S^n \setminus S^k$ with constant scalar curvature, {\it i.e.}
\[
\#\mathcal{M}_{\mathrm{Geom}}({S}^n\setminus {S}^k, [g_{\rm rd}])\gtrsim\aleph_0.
\]
\end{theorem}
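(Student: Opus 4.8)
The plan is to reduce homothety of these metrics to an equality of hyperbolic covolumes and then to separate them by a single normalized volume. Fix once and for all a cocompact lattice $\Gamma\subset\operatorname{Isom}(H^{k+1})$ (for $k=0$ replace $H^{k+1}$ by the line $\mathbb R$ and $\Gamma$ by a group $\ell\mathbb Z$ of translations, as in Theorem~\ref{thm:schoen}); since $\Gamma$ is infinite and residually finite it admits a strictly decreasing chain $\Gamma=\Gamma_0\supsetneq\Gamma_1\supsetneq\cdots$ of finite-index subgroups, hence a tower of compact hyperbolic manifolds $\Sigma_j:=H^{k+1}/\Gamma_j$ with $\operatorname{covol}(\Gamma_j)=[\Gamma_0:\Gamma_j]\operatorname{covol}(\Gamma_0)\to\infty$. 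By Theorem~\ref{thm:BP}, to each level is associated a complete periodic metric $g_j$ on $S^n\setminus S^k\cong S^{n-k-1}\times H^{k+1}$, normalized to solve \eqref{eq:Yamabesingular}, obtained by pulling back along $\operatorname{id}\times q_j\colon S^{n-k-1}\times H^{k+1}\to S^{n-k-1}\times\Sigma_j$ a constant-scalar-curvature metric $h_j$ on $S^{n-k-1}\times\Sigma_j$ that is conformal to the product metric. It suffices to produce, after passing to a subtower and choosing $h_j$ appropriately along the bifurcating branches, metrics $g_j$ that are pairwise nonhomothetic; since $\#\Gamma<\infty$ is impossible, this yields the asserted estimate $\#\mathcal{M}_{\mathrm{Geom}}(S^n\setminus S^k,[g_{\rm rd}])\gtrsim\aleph_0$.

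\emph{Step 1: homotheties are product isometries.} If $\Phi^\ast g_i=c^2g_j$, then, since $R_{g_i}$ and $R_{g_j}$ both equal the same positive constant $(n-1)(n-2k-2)$, comparing scalar curvatures forces $c=1$; thus $\Phi$ is an isometry and in particular lies in $\operatorname{Conf}(S^n\setminus S^k,[g_{\rm rd}])$. The conformal manifold $S^n\setminus S^k$ is conformally flat and, being homotopy equivalent to $S^{n-k-1}$ with $n-k-1\geq k+2\geq 2$, is conformally diffeomorphic to neither $S^n$ nor $\mathbb R^n$. Applying the Obata--Ferrand rigidity theorem to the complete representative $g_{S^{n-k-1}}\oplus g_{H^{k+1}}$, the conformal group is therefore non-essential, i.e.\ it preserves a metric in $[g_{\rm rd}]$; by transitivity that metric is homogeneous, and by the de~Rham splitting (the two factors are irreducible and, having curvatures of opposite signs, cannot be interchanged) it is proportional to $g_{S^{n-k-1}}\oplus g_{H^{k+1}}$. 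Hence $\operatorname{Conf}(S^n\setminus S^k)=\operatorname{Isom}(S^{n-k-1})\times\operatorname{Isom}(H^{k+1})$ acting by product maps, and every homothety between two $g_j$ has the form $(\psi,\phi)$ with $\psi\in\operatorname{Isom}(S^{n-k-1})$ and $\phi\in\operatorname{Isom}(H^{k+1})$.

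\emph{Step 2: a homothety invariant.} Suppose $\Phi=(\psi,\phi)$ satisfies $\Phi^\ast g_i=g_j$. Since $g_i$ is invariant under the deck group $\{e\}\times\Gamma_i$ and, via $\Phi$, also under $\{e\}\times\phi\Gamma_j\phi^{-1}$, it is invariant under $\{e\}\times\Delta$ with $\Delta:=\langle\Gamma_i,\phi\Gamma_j\phi^{-1}\rangle$. This group acts isometrically on the complete manifold $(S^n\setminus S^k,g_i)$ and fixes each $H^{k+1}$-slice, hence acts properly on $H^{k+1}$; were it not discrete, the identity component of its closure would be a positive-dimensional subgroup normalized by the Zariski-dense cocompact lattice $\Gamma_i$, hence, as $\operatorname{Isom}(H^{k+1})^0$ is simple, all of $\operatorname{Isom}(H^{k+1})^0$, contradicting properness of $\operatorname{Isom}(g_i)$. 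So $\Delta$ is a cocompact lattice containing $\Gamma_i$ and $\phi\Gamma_j\phi^{-1}$ with finite index. Writing $\Vol(h_i)$ and $\Vol(h_j)$ as integrals of $g_i$, $g_j$ over fundamental domains for $\Gamma_i$ and $\Gamma_j$, subdividing each into translates of a fundamental domain for $\Delta$ (resp.\ $\phi^{-1}\Delta\phi$), and using that $\Phi$ carries the latter isometrically onto the former, one gets
\[
\frac{\Vol(h_i)}{\Vol(h_j)}=\frac{[\Delta:\Gamma_i]}{[\Delta:\phi\Gamma_j\phi^{-1}]}=\frac{\operatorname{covol}(\Gamma_i)}{\operatorname{covol}(\Gamma_j)}.
\]
Consequently the number $\nu(g_j):=\Vol(h_j)/\operatorname{covol}(\Gamma_j)$ depends only on the isometry class of $g_j$ as a metric on $S^n\setminus S^k$ (it is also unaffected by pulling $h_j$ further up the tower), and $g_i$ homothetic to $g_j$ implies $\nu(g_i)=\nu(g_j)$.

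\emph{Step 3, and the main obstacle.} It now suffices to choose the $g_j$ so that $\nu(g_j)$ takes infinitely many distinct values. For the product metric $\nu=\Vol(S^{n-k-1})$, independent of $j$, so one must use the genuinely new, non-product solutions underlying Theorem~\ref{thm:BP}, which appear at infinitely many levels of the tower (those not obtained by pulling back a solution from an earlier level). Along a bifurcating branch $t\mapsto h_j^{(t)}$ through the product, $\nu(g_j^{(t)})$ is a smooth function with $\nu(g_j^{(0)})=\Vol(S^{n-k-1})$; provided it is nonconstant in $t$, it sweeps out a nondegenerate interval, and one can then pick $h_j$ on the branch so that $\nu(g_j)$ differs from the finitely many values chosen at earlier levels, making the resulting $g_j$ pairwise nonhomothetic. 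The crux — and what I expect to be the real work — is exactly this nondegeneracy: showing that the normalized volume is not constant along the bifurcating branch, equivalently that the bifurcation is not volume-preserving to all orders. I expect this to come from the second-variation expansion of the Yamabe energy at the bifurcation instant (the branch cannot lie in a level set of the volume functional unless the relevant quadratic form degenerates), and it is precisely here that the word ``generically,'' together with the freedom to discard the exceptional levels, is used.
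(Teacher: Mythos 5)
Your proposal takes a genuinely different route from the paper's, but as written it does not close. The decisive gap is the one you flag yourself in Step~3: the whole argument reduces to showing that the normalized volume $\nu$ takes infinitely many values along the non-product solutions, i.e.\ that the bifurcating branches underlying Theorems~\ref{thm:BPS} and~\ref{thm:BP} are not volume-preserving (relative to covolume). Nothing in the proposal establishes this, and it is not a routine second-variation computation: those bifurcation results detect branches through jumps of the Morse index and give no quantitative control of $\Vol(h_j)$ along the branch, so you cannot rule out that every solution you pick has $\nu=\Vol(S^{n-k-1})$, in which case Step~2 yields no information. There is also a flaw inside Step~2: when $\Delta=\langle\Gamma_i,\phi\Gamma_j\phi^{-1}\rangle$ fails to be discrete you derive a contradiction with ``properness of $\operatorname{Isom}(g_i)$'', but $\{e\}\times\operatorname{Isom}(H^{k+1})^0$ \emph{does} act properly on $S^{n-k-1}\times H^{k+1}$, so no contradiction arises; indeed $\Delta$ is genuinely non-discrete when $g_i$ is the product metric. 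Thus $\nu$ is a homothety invariant only after one has separately excluded metrics with continuous symmetry in the hyperbolic directions, which is not done.

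The paper sidesteps both difficulties by never comparing solutions directly on the noncompact space. It takes unit-volume Yamabe \emph{minimizers} $\bar g_j$ on a tower of coverings whose volumes grow fast enough that the normalized total scalar curvature satisfies $\mathcal{A}(\bar g_{j+1})<\mathcal{A}(\pi_{j+1}^\ast\bar g_j)$; since $\mathcal{A}$ is a scale- and diffeomorphism-invariant of compact manifolds, the pullbacks to a common covering are automatically pairwise nonhomothetic there --- no bifurcation analysis or volume nondegeneracy is required. Ferrand--Obata is then invoked only once, on the universal cover: if infinitely many of the lifts were pairwise homothetic, one manufactures conformal maps $\Psi_j$ of $\cM$ carrying a fixed fundamental domain onto sets of volume tending to zero, so $\Conf(\cM,[\cpi^\ast g])$ acts nonproperly and $\cM$ would be conformally flat Euclidean space, contradicting $\cM\simeq S^{n-k-1}\times\mathbb{R}^{k+1}$. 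Your Step~1, identifying $\Conf(S^n\setminus S^k)$ with the product isometry group, is essentially correct and is a nice observation, but the paper's argument does not need it; if you want to salvage your approach, the missing nondegeneracy of $\nu$ is the point that must be proved.
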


The core geometric input in our argument is the Ferrand--Obata theorem ~\cites{MR1371767,MR0303464,MR1334876}, which states that the only conformal manifolds for which the conformal group acts nonproperly are $\mathbb{R}^n$ and $S^n$ with their flat conformal structures. 
This allows us to distinguish periodic metrics in the universal cover when it is not $\mathbb{R}^n$.

In addition to its intrinsic interest in the geometry of the Yamabe problem, our result is part of a broader circle of questions concerning the classification of periodic solutions to conformally invariant equations, the geometry and topology of moduli spaces of constant curvature metrics, and the interaction between variational bifurcation theory and geometric topology.
In particular, the papers \cites{MR4251294, MR3803113, MR4695635, arXiv:2310.15798} establish existence and bifurcation results for complete metrics that are constant with respect to other types of curvature, demonstrating how similar nonuniqueness phenomena arise in this setting. 
This note complements the existing works by providing a geometric classification of the periodic solutions obtained via finite coverings, demonstrating that they yield pairwise nonhomothetic metrics.

This note is organized as follows. In Section~\ref{sec:preliminaries}, we collect the basic definitions and results concerning the variational setup for the Yamabe problem, basic definitions about residually finite groups, and the Ferrand--Obata theorem.
These serve as foundational tools in our proof. In Section~\ref{sec:main-proof}, we prove our main result on the nonhomothety of the periodic scalar curvature metrics on the universal covering obtained via finite coverings. 

\section{Preliminaries}
\label{sec:preliminaries}

We gather here the main geometric, analytic, and topological tools that form the foundation of our proof. 

\subsection{Existence theory for the classical Yamabe problem}
We study the classical Yamabe problem, which consists of finding a smooth positive solution to \eqref{eq:Yamabe}.
This geometric PDE may be formulated variationally in terms of the Einstein--Hilbert functional on the conformal class $[g]$.

Let $(M,g)$ be a compact $n$-dimensional Riemannian manifold.
The (volume-normalized) total scalar curvature functional $\mathcal{A}:[g]\to \mathbb{R}$ is given by
\begin{equation}\label{eq:EH-functional}
\mathcal{A}(\bar{g}):={\mathrm{vol}_{\bar{g}}(M)^{\frac{2-n}{n}}}{\int_M R_{\bar{g}} \, d{\rm v}_{\bar{g}}}.
\end{equation}
Note that $\mathcal{A}(c^2\bar{g})=\mathcal{A}(\bar{g})$ is scale invariant for all constants $c>0$.
Given
\[ u\in \mathcal{C}^{\infty}_+(M)=\{u\in \mathcal{C}^{\infty}(M) : u>0\} , \]
we set $\bar{g}_u = u^{\frac{4}{n-2}}g$.
Thus, we may rewrite the total scalar curvature functional as a nonlinear Rayleigh quotient $\mathscr{Q}_g:\mathcal{C}^\infty_+(M)\to \mathbb{R}$ defined as 
\begin{equation}\label{eq:Yamabe-quotient}
\mathscr{Q}_g(u) := \frac{\int_M \left( a_n |\nabla u|^2 + R_g \, u^2 \right)\ud{\rm v}_g}{\left( \int_M u^{\frac{2n}{n-2}}\ud{\rm v}_g \right)^{\frac{n-2}{n}}}.
\end{equation}
The infimum of $\mathscr{Q}_g$ over $\mathcal{C}^\infty_+(M)$ defines the Yamabe constant of the conformal class:
\[
Y(M,[g]) := \inf_{u \in C^\infty_+(M)} \mathscr{Q}_g(u).
\]
The variational theory of the Yamabe problem unfolded through the combined efforts of many mathematicians. 
Yamabe~\cite{MR0125546} initiated the program by proposing a direct minimization argument, but his proof contained an analytic gap concerning compactness. 
This was partially addressed by Trudinger~\cite{MR0240748}, and later advanced by Aubin~\cite{MR0448404}, who introduced key comparison techniques. 
Finally, Schoen~\cite{MR788292} resolved the problem in its full generality by employing the positive mass theorem to handle the case when local geometry is insufficient to obtain a key estimate on the Yamabe constant.

Putting together the result of Yamabe \cite{MR0125546}, Trudinger \cite{MR0240748}, Aubin \cite{MR0448404}, and Schoen \cite{MR788292}, we have the following existence result:

\begin{propositionletter}[\cites{MR0125546, MR0240748, MR0448404, MR788292}]\label{prop:Yamabe-existence}
Let $(M,g)$ be a compact $n$-dimensional Riemannian manifold with $n \geqslant 3$. 
Then, $Y(M,[g])\leqslant Y(S^n,[g_{S^n}])$ and the Yamabe constant $Y(M,[g])$ is achieved by some $u\in\mathcal{C}^\infty_+(M)$. 
Moreover, $Y(M,[g])= Y(S^n,[g_{S^n}])$ if and only if $(M,[g])$ is conformally equivalent to $(S^n,[g_{S^n}])$.
\end{propositionletter}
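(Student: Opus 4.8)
The plan is to reprove the classical Yamabe theorem by the variational method, splitting the statement into three parts: the bound $Y(M,[g])\leqslant Y(S^n,[g_{S^n}])$, the attainment of the infimum, and the rigidity in the equality case. For the first part I would estimate the Rayleigh quotient $\mathscr{Q}_g$ on a family of concentrating test functions. Fix $p\in M$, work in geodesic normal coordinates centered at $p$, and for small $\eps>0$ transplant a cutoff of the Aubin--Talenti bubble $U_\eps(x)=(\eps/(\eps^2+|x|^2))^{(n-2)/2}$ to $M$ via the exponential map. Expanding numerator and denominator of $\mathscr{Q}_g(u_\eps)$ in powers of $\eps$, using $\sqrt{\det g}=1+O(|x|^2)$ and the known value of the sharp Euclidean Sobolev quotient (which equals $Y(S^n,[g_{S^n}])$), gives $\mathscr{Q}_g(u_\eps)=Y(S^n,[g_{S^n}])+o(1)$ as $\eps\to0$, hence $Y(M,[g])\leqslant Y(S^n,[g_{S^n}])$.

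For the attainment, assume first that the inequality is strict, $Y(M,[g])<Y(S^n,[g_{S^n}])$. For exponents $q\in(2,2^*)$, $2^*=\frac{2n}{n-2}$, introduce the subcritical quotient $\mathscr{Q}_g^q(u)=\int_M(a_n|\nabla u|^2+R_gu^2)\,d{\rm v}_g/\|u\|_{L^q}^2$ and its infimum $Y_q$. Since $W^{1,2}(M)\hookrightarrow L^q(M)$ is compact for $q<2^*$, the direct method yields a minimizer which, after replacing $u$ by $|u|$ and applying the strong maximum principle and elliptic regularity, is a smooth positive solution $u_q$ of $L_gu_q=Y_q\,u_q^{q-1}$ with $\|u_q\|_{L^q}=1$. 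One then checks $Y_q\to Y(M,[g])$ and that $\{u_q\}$ is bounded in $W^{1,2}(M)$, and extracts a weak limit $u\geqslant0$. The key point is to exclude concentration: if $u\equiv0$, the measures $|u_q|^{2^*}\,d{\rm v}_g$ concentrate at a point, and a blow-up (rescaling) argument combined with the sharp Euclidean Sobolev inequality forces $Y(M,[g])\geqslant Y(S^n,[g_{S^n}])$, contradicting strictness. Hence $u\not\equiv0$ is a nonnegative weak solution of $L_gu=Y(M,[g])\,u^{2^*-1}$ realizing the infimum; regularity and the maximum principle promote it to a smooth positive Yamabe minimizer, so $Y(M,[g])$ is achieved.

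It remains to show $Y(M,[g])<Y(S^n,[g_{S^n}])$ whenever $(M,[g])$ is not conformally equivalent to $(S^n,[g_{S^n}])$, which is the heart of the theorem and simultaneously settles the equality case. Here one refines the test-function estimate. If $n\geqslant6$ and $(M,g)$ is not locally conformally flat, choose $p$ with $|W_g(p)|\neq0$ and carry the expansion of $\mathscr{Q}_g(u_\eps)$ one order further; the correction is negative of order $|W_g(p)|^2\eps^4$, giving $\mathscr{Q}_g(u_\eps)<Y(S^n,[g_{S^n}])$ for small $\eps$ (Aubin's argument). In the remaining cases ($n\in\{3,4,5\}$, or $(M,g)$ locally conformally flat) pass to Lee--Parker conformal normal coordinates and follow Schoen: when $Y(M,[g])>0$, a blow-up of $(M,g)$ at $p$ is an asymptotically flat scalar-flat manifold whose ADM mass appears as the constant term in the expansion of the relevant conformal factor; the positive mass theorem shows this mass is strictly positive unless the blow-up is flat $\mathbb{R}^n$, i.e.\ unless $(M,[g])$ is conformal to $(S^n,[g_{S^n}])$, and positivity of the mass makes the correction term strictly negative. (The case $Y(M,[g])\leqslant0$ is immediate, since $Y(S^n,[g_{S^n}])>0$.) Thus the only obstruction to a strict inequality is $(M,[g])\cong(S^n,[g_{S^n}])$. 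For the equality assertion: if $(M,[g])$ is not conformally round, the above gives $Y(M,[g])<Y(S^n,[g_{S^n}])$, so equality is impossible; conversely, since $Y$ is a conformal invariant, $(M,[g])\cong(S^n,[g_{S^n}])$ forces $Y(M,[g])=Y(S^n,[g_{S^n}])$, with the minimizer given by the pullback of the round metric, consistent with Obata's theorem.

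The main obstacle is the last step in the locally conformally flat and low-dimensional regime. Producing a sufficiently precise expansion of $\mathscr{Q}_g$ there requires the conformal normal coordinates of Lee--Parker to annihilate the lower-order curvature terms, and the surviving constant term must be identified with an ADM-type mass of an asymptotically flat blow-up; the strict positivity of that mass — together with its rigidity statement characterizing flat $\mathbb{R}^n$ — is exactly the positive mass theorem. Everything else (the concentrating test functions, the subcritical approximation, compactness below the threshold $Y(S^n,[g_{S^n}])$, and the regularity and maximum-principle arguments) is by now standard.
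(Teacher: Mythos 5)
Your outline is correct and is essentially the classical argument that the paper itself invokes: the paper offers no independent proof of Proposition~\ref{prop:Yamabe-existence}, but simply cites Yamabe, Trudinger, Aubin, and Schoen, whose combined proof is precisely the route you sketch (Aubin--Talenti test functions for $Y(M,[g])\leqslant Y(S^n,[g_{S^n}])$, subcritical approximation and compactness below the sphere threshold for attainment, and Aubin's Weyl-tensor expansion plus Schoen's positive mass argument in conformal normal coordinates for the strict inequality and the equality case). No gaps beyond the standard reliance on the positive mass theorem, which you correctly identify as the essential input.
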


\begin{proof}
    See \cite{MR0125546}*{Theorem~A}, \cite{MR0240748}*{Theorem~2}, \cite{MR0448404}*{Théorème~11}, and \cite{MR788292}*{Theorem~3}.
\end{proof}

For the standard sphere $(S^n, g_{S^n})$, one has the explicit formula
\[
Y(S^n,[g_{S^n}]) = n(n-1) \omega_n^{2/n}.
\]
This was first proved independently by Aubin \cite{MR0448404} and Talenti \cite{MR0463908}.
Another proof, which relies on the existence of minimizers, uses Obata's classical result \cite{MR0303464} that if a compact Riemannian manifold is conformally Einstein and has constant scalar curvature, then it is Einstein; hence, $\mathcal{M}_{\rm PDE}(M,[g])$ is a point if $(M,g)$ is Einstein.
More generally, if $Y(M,[g])\leqslant 0$, then a maximum principle argument shows that $\mathcal{M}_{\rm geom}(M,[g])$ is a point.
However, uniqueness fails in general for conformal manifolds with a positive Yamabe constant.

\subsection{Yamabe metrics on noncompact manifolds and singular solutions on spheres}
The classical Yamabe problem admits a natural extension to noncompact or singular settings. Given a smooth Riemannian manifold $(M, g)$ of dimension $n \geqslant 3$, one seeks complete conformal metrics $\bar{g}_u \in [g]$ on $M \setminus \Lambda$ with constant scalar curvature, where $\Lambda \subset M$ is a closed subset along which $u$ necessarily blows up.
This leads to the singular Yamabe equation
\begin{equation} \label{eq:singularYamabe} \tag{\(\mathscr{Y}^{*}_{n,\Lambda}\)}
\begin{cases}
a_n\Delta_g u + R_g u = \lambda u^{\frac{n+2}{n-2}} \quad \text{in} \quad M \setminus \Lambda, \\
\lim_{x \to \Lambda} u(x) = \infty ,
\end{cases}
\end{equation}
for some constant $\lambda \in \bR$.
The blow-up condition is necessary for the conformal metric $\bar{g} = u^{\frac{4}{n-2}} g$ to be complete on $M \setminus \Lambda$.

A prototypical case is the singular Yamabe problem on the round sphere $(S^n, g_{S^n})$ with singular set $\Lambda = S^k$, where $0 \leqslant k < \frac{n-2}{2}$ (in this case, the PDE formulation is given by \eqref{eq:Yamabesingular}).
The conformal equivalence
\[
(S^n \setminus S^k, g_{S^n}) \cong (S^{n-k-1} \times H^{k+1}, g_{S^{n-k-1}} \oplus g_{H^{k+1}})
\]
allows for a reduction to the compact setting via compact quotients of the hyperbolic factor. 
This strategy has been effectively implemented in~\cites{MR3504948,MR3803113} to construct periodic solutions descending from products $S^{n-k-1} \times \Sigma^{k+1}$.

Earlier, Schoen~\cite{MR994021} had employed bifurcation theory to produce uncountably many complete metrics with isolated singularities on $S^n \setminus \{p, -p\}$, arising as perturbations of Delaunay-type solutions to an associated ODE. These serve as a local model for necks connecting spherical regions in the conformal metric.
Mazzeo and Pacard developed gluing techniques~\cites{MR1356375, MR1425579, MR1712628} to construct singular solutions with higher-dimensional singular sets. Their analysis shows that, under dimension restrictions on the singular locus $\Lambda$, there exist infinite-dimensional families of complete metrics of constant positive scalar curvature on $M \setminus \Lambda$. These are obtained via perturbative constructions around model solutions on the normal bundle, where the critical exponent becomes subcritical due to dimensional reduction.

These developments highlight the rich structure of the analytic moduli space of singular Yamabe metrics in the noncompact setting and lay the foundation for the classification results discussed in the present work.

\subsection{The Ferrand--Obata theorem}
A fundamental feature of the round conformal class on the sphere and its noncompact models is its exceptional degree of symmetry. 
The Ferrand--Obata theorem characterizes in what way they are exceptional.
To explain this, we begin with some definitions:
\begin{definition}
Let \((M, g)\) be a smooth Riemannian manifold of dimension \(n \geq 2\). The \emph{conformal group} \(\Conf(M, [g])\) consists of all diffeomorphisms \(\varphi \in \Diff(M)\) such that \(\varphi^*g = e^{2u} g\) for some smooth function \(u \in C^\infty(M)\) equipped with the \emph{compact-open topology}, {\it i.e.} a sequence \(\{\varphi_k\}_{k\in\mathbb{N}}\subset\Diff(M)\) converges to \(\varphi\) if for every compact set \(K \subset M\), the maps \(\varphi_k\) and their derivatives converge uniformly to \(\varphi\) and its derivatives on \(K\). 
We say that ${\rm Conf}(M,[g])$ \emph{acts properly} on $M$ if 
\(\{g\in {\rm Conf}(M,[g]) : gK\cap K\neq\emptyset\}\) is relatively compact in the compact-open topology whenever $K\subseteq M$ is compact.
\end{definition}

The Ferrand--Obata theorems assert that, except for $\mathbb{R}^n$ and $S^n$ with their flat structures, the conformal automorphism group of a Riemannian manifold acts properly. 

\begin{propositionletter}[\cites{MR0303464,MR1371767}]
\label{thm:obata-ferrand}
Let \((M^n, g)\) be a complete Riemannian manifold of dimension \(n \geqslant 3\) which is conformally equivalent to the round sphere \((S^n, g_{S^n})\). If the conformal group \(\Conf(M, g)\) does not act properly, then \((M, g)\) is conformal to the round sphere $(S^n,g_{S^n})$ or to flat Euclidean space $(\mathbb{R}^n,\delta)$.
\end{propositionletter}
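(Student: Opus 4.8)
\textit{Proof strategy.} The plan is to transport the problem to the model space $(S^n,[g_{S^n}])$ via a developing map and then use the non-properness hypothesis, amplified by completeness, to pin down the global picture. Since $(M,g)$ is conformally equivalent to the round sphere it is locally conformally flat, so its universal cover $\widetilde{M}$ admits a conformal immersion $D\colon\widetilde{M}\to S^n$ into the conformal $n$-sphere; by Liouville's theorem (this is where $n\geq 3$ is used) every conformal transformation of $\widetilde{M}$ — in particular each lift of an element of $\Conf(M,g)$ and each deck transformation in $\Gamma:=\pi_1(M)$ — is intertwined by $D$ with a unique element of the Möbius group $G:=\Conf(S^n,[g_{S^n}])\cong\mathrm{PO}(n+1,1)$. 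This yields a holonomy homomorphism $\rho$ with $D\circ\psi=\rho(\psi)\circ D$, and non-properness of the $\Conf(M,g)$-action on $M$ passes to non-properness of the $\rho(\Conf(M,g))$-action on the open, $\rho$-invariant set $\Omega:=D(\widetilde{M})\subseteq S^n$.

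The core of the argument is dynamical. Non-properness yields a compact $K_0\subseteq\Omega$ and transformations $\psi_k$ with $\rho(\psi_k)(K_0)\cap K_0\neq\emptyset$ and $\rho(\psi_k)\to\infty$ in $G$; by the convergence-group property of the Möbius group, after passing to a subsequence there are points $p^{\pm}\in S^n$ with $\rho(\psi_k)\to p^{+}$ uniformly on compact subsets of $S^n\setminus\{p^{-}\}$. Tracking the intersection condition (either $p^{-}\in K_0\subseteq\Omega$ outright, or else $\rho(\psi_k)(K_0)\to\{p^{+}\}$ while meeting $K_0$) shows that at least one of $p^{\pm}$ lies in $\Omega$; replacing $\psi_k$ by $\psi_k^{-1}$ if needed (this swaps $p^{+}$ and $p^{-}$ and preserves the intersection condition), we may take $p^{+}\in\Omega$. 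The complement $\Lambda:=S^n\setminus\Omega$ is closed and $\rho(\psi_k)$-invariant, so if some $w\in\Lambda$ were different from $p^{-}$ then $\rho(\psi_k)(w)\to p^{+}$ with $\rho(\psi_k)(w)\in\Lambda$, forcing $p^{+}\in\Lambda$ — impossible. Hence $\Lambda\subseteq\{p^{-}\}$, i.e. $\Omega=S^n$ or $\Omega=S^n\setminus\{p^{-}\}$. Now completeness of $g$ enters: a standard argument shows the conformal immersion $D$ from the complete manifold $\widetilde{M}$ is a covering map onto $\Omega$, and since $\widetilde{M}$ is simply connected, $D$ is a conformal diffeomorphism onto $\Omega$.

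It remains to remove $\Gamma=\pi_1(M)$, which now acts through $\rho$ freely, properly discontinuously and conformally on $\Omega$, normalized by the non-proper group $\rho(\Conf(M,g))$. If $\Omega=S^n$, then $\Gamma$ is a discrete subgroup of $G$ acting freely on the compact manifold $S^n$, hence finite; being finite it is conjugate into the maximal compact $\mathrm{O}(n+1)\subseteq G$, and by freeness it has no nonzero fixed vector in the standard representation $\mathbb{R}^{n+1}$, so its normalizer in $G$ is compact and cannot contain the non-proper group $\rho(\Conf(M,g))$ — forcing $\Gamma=1$; thus $(M,g)$ is conformal to $(S^n,g_{S^n})$. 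If $\Omega=S^n\setminus\{p^{-}\}\cong\mathbb{R}^n$, then $\Gamma$ lies in the stabilizer of $p^{-}$, the similarity group of $\mathbb{R}^n$; freeness rules out similarities with a fixed point, so $\Gamma$ consists of Euclidean isometries, and a nontrivial discrete free group of such contains a translation (Bieberbach), making $\Conf(\mathbb{R}^n/\Gamma,[\delta])$ act properly — again a contradiction. So $\Gamma=1$ and $(M,g)$ is conformal to $(\mathbb{R}^n,\delta)$.

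I expect the main obstacle to lie in the core step, in using completeness to promote the a priori merely immersive developing map $D$ to a covering onto its image: this is precisely where Ferrand's argument goes beyond Obata's compact case — in which $\widetilde{M}$ is automatically a finite cover of $S^n$, so no such step is needed — and where completeness is indispensable. A second difficulty is masked by the standing hypothesis: to prove the full Ferrand--Obata statement without assuming $(M,g)$ conformal to the round sphere, one must first deduce conformal flatness — vanishing of the Weyl tensor for $n\geq 4$, resp. of the Cotton tensor for $n=3$ — directly from non-properness, by blowing up the conformal factors $e^{\lambda_k}$ of an escaping sequence $\varphi_k$ at its concentration point, recognizing the rescaled limit geometry as flat, and upgrading this to identical vanishing via a Pohozaev-type identity (the arguments of Obata and of Schoen). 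Under the hypothesis as stated that step is free, and the work concentrates in the developing-map analysis above.
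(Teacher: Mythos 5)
The paper does not actually prove this proposition; its ``proof'' is the citation to Ferrand's Theorem~A$_1$, whose argument runs through conformal capacities/invariants. Your sketch instead follows the developing-map--plus--convergence-dynamics route, which is a legitimate alternative strategy for the locally conformally flat case (and you correctly read the hypothesis as local conformal flatness, i.e.\ conformal equivalence to an \emph{open subset} of $S^n$, which is what the application in Proposition~\ref{construction-lemma} requires). The dynamical core --- $KAK$ north--south dynamics, invariance of $\Lambda=S^n\setminus\Omega$, and the conclusion $\Lambda\subseteq\{p^-\}$ --- is sound, modulo the transfer of non-properness from $M$ to the holonomy image: you need to rule out that $\rho(\psi_k)$ stays bounded in $G$ while $\psi_k\to\infty$ in $\Conf(M,g)$, which requires the $2$-jet rigidity of conformal maps (a bounded conformal factor plus $\psi_k(K)\cap K\neq\emptyset$ forces $C^\infty_{\loc}$ precompactness); this is fixable but not addressed.

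The genuine gap is the step you yourself flag as the crux: the assertion that ``a standard argument shows the conformal immersion $D$ from the complete manifold $\widetilde M$ is a covering map onto $\Omega$.'' No such standard argument exists, and the statement is false as a general principle: completeness is with respect to $g$, whereas $D$ is only \emph{conformal}, not isometric, onto $(\Omega, g_{S^n}|_\Omega)$, so the path-lifting criterion for coverings does not apply. Concretely, the hyperbolic upper half-plane is complete, simply connected and conformally flat, yet the conformal immersion $z\mapsto e^z$ into $\mathbb{C}\subset S^2$ is neither injective nor a covering onto its image (the image overlaps itself); direct higher-dimensional analogues exist. Since $\Omega$ is simply connected once you know $\Omega=S^n$ or $S^n\setminus\{p^-\}$, your claim amounts to injectivity of $D$, and that is precisely the hard point that Ferrand's capacity argument, and likewise Schoen's and Frances' proofs, are designed to overcome --- typically by exploiting the contracting dynamics of $\rho(\psi_k)$ near $p^+$ to first show $D$ is injective on a neighborhood of a suitable orbit and then propagate. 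As written, the proof does not close. (A secondary, repairable error: in the $\Omega\cong\mathbb{R}^n$ case, a nontrivial discrete group of Euclidean isometries acting freely need not contain a translation --- a screw motion with irrational rotation angle generates a discrete free $\mathbb{Z}$ with no translations --- though one can still show the normalizer of a nontrivial such $\Gamma$ in the similarity group consists of isometries, via minimal displacements, so the intended contradiction survives.)
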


\begin{proof}
    See \cite{MR1371767}*{Theorem~A${}_1$}.
\end{proof}

As we will see below, Proposition~\ref{thm:obata-ferrand} plays a central role in preventing lifts of nonhomothetic metrics to the universal cover from becoming homothetic. 

\subsection{Residual finiteness and profinite completions}
A key step in our construction relies on the existence of an infinite tower of finite regular coverings of the compact manifold \(M\). 
For this, one needs to understand the normal subgroups of finite index in the fundamental group \( \pi_1(M) \). 
This is governed by the concept of residual finiteness.

First, let us define the notion of profinite completion.
\begin{definition}
Let \( \Gamma \) be a discrete group. The \emph{profinite completion} \( \widehat{\Gamma} \) is the inverse limit
\[
\widehat{\Gamma} := \varprojlim_{\substack{N \trianglelefteq \Gamma\\ \mathrm{[}\Gamma : N\mathrm{]} < \infty}} \Gamma/N,
\]
taken over all finite index normal subgroups of \( \Gamma \). 
It is a compact, totally disconnected topological group.
\end{definition}

Second, we define residual finiteness.
\begin{definition}
Let $\Gamma$ be a discrete group. We say that 
\begin{itemize}
    \item[{\rm (i)}] \( \Gamma \) is \emph{residually finite}, if for every nontrivial element \( \gamma \in \Gamma \), there exists a finite index normal subgroup \( N \trianglelefteq \Gamma \) such that \( \gamma \notin N \). 
    Equivalently, the natural map \( \Gamma \to \widehat{\Gamma} \), where \( \widehat{\Gamma} \) denotes the profinite completion of \( \Gamma \), is injective. 
    \item[{\rm (ii)}] $\Gamma$ has \emph{infinite profinite completion}, if it has infinitely many nontrivial finite index normal subgroups (equivalently, its profinite completion is infinite).
\end{itemize}
\end{definition}

If \( \pi_1(M) \) is infinite and residually finite, then it has infinite profinite completion. 
Hence there is a nested sequence
\[ \dotsm \trianglelefteq \Gamma_j \trianglelefteq \dotsm \trianglelefteq \Gamma_1 \trianglelefteq \Gamma_0 := \pi_1(M) \]
of normal subgroups of finite index at least two.
This gives rise to an infinite tower of finite regular coverings
\[
\dotsb \longrightarrow M_j \longrightarrow \dotsb \longrightarrow M_1 \longrightarrow M_0 := M,
\]
where each \( M_j \) corresponds to the covering associated to \( \Gamma_j \trianglelefteq \pi_1(M) \), and the covering map \( \pi_j \colon M_j \to M_{j-1} \) is regular with deck transformation group \( \Gamma_{j-1}/\Gamma_j \) for any $j\in\mathbb{N}$.

In the next remark, we provide some connections between residually finite groups and the Galois theory of coverings and deck transformations.
\begin{remark}
\label{rmk:galois-coverings}
Let \( \widetilde{M} \to M \) be the universal covering of a connected manifold \( M \), with deck transformation group \( \Gamma = \pi_1(M) \). Then, there exists a Galois-type correspondence between subgroups of \( \Gamma \) and connected covering spaces of \( M \). 
More specifically, one has the following properties:
\begin{itemize}
    \item[{\rm (i)}] Each finite index subgroup \( N \leq \Gamma \) corresponds to a \emph{connected finite-sheeted covering} \( M_N \to M \), unique up to isomorphism.
    
    \item[{\rm (ii)}] If \( N \trianglelefteq \Gamma \) is normal, then the covering \( M_N \to M \) is \emph{regular (also called Galois)}, with deck transformation group \( \Gamma / N \). The group \( \Gamma \) \emph{acts transitively} on the fiber of the covering.
    
    \item[{\rm (iii)}] The inverse system of \emph{finite index} normal subgroups \( N \trianglelefteq \Gamma \) defines the \emph{profinite completion} \( \widehat{\Gamma} = \varprojlim \Gamma / N \). This topological group encodes the totality of finite Galois coverings of \( M \).
\end{itemize}

In this sense, the theory of deck transformations realizes the classical Galois correspondence: open subgroups of \( \widehat{\Gamma} \) correspond to finite-sheeted regular coverings of \( M \), and their quotients give the corresponding deck groups.
Thus the assumption that $\pi_1(M)$ has infinite profinite completion ensures the richness of this correspondence.
In particular, arbitrarily large finite regular coverings exist and may be used to construct Yamabe metrics on increasingly large covers.
\end{remark}

Now, we have an important definition for regular coverings.

\begin{definition}
Let $\pi\colon \widetilde M\to M$ be a finite connected covering and $m\in \mathbb{N}_0$. 
We say that $\pi$ has \emph{degree} $m$ if for every $x\in M$ the fibre $\pi^{-1}(x)$ consists of exactly $m$ points, {\it i.e.} \(|\pi^{-1}(x)\bigr| = m\) for all \(x\in M\).
If in addition the covering is regular (Galois), then its degree equals the index of the corresponding subgroup of the fundamental group, {\it i.e.} \(m :=[\pi_1(M)\colon\pi_1(\widetilde M)]\).    
We denote it by $\deg(\pi)=m$.
\end{definition}

The existence of such towers is guaranteed for a large class of manifolds via the Selberg–Mal'cev lemma, which can be stated as follows:

\begin{propositionletter}[]
\label{lem:selberg-malcev-precise}
If \( \Gamma \subset \mathrm{GL}(m, \mathbb{C}) \) is a finitely generated linear group, then \( \Gamma \) is residually finite.
\end{propositionletter}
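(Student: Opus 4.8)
The plan is to reproduce Mal'cev's classical argument. The idea is to replace $\bC$ by a finitely generated subring carrying all the relevant data, and then to exploit the abundance of finite-field quotients of such a ring to manufacture, for each nontrivial $\gamma\in\Gamma$, a finite quotient of $\Gamma$ in which $\gamma$ survives.

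Concretely, I would first fix generators $A_1,\dots,A_s\in\GL(m,\bC)$ of $\Gamma$ and let $R\subset\bC$ be the $\bZ$-subalgebra generated by all entries of the $A_i$ together with the elements $\det(A_i)^{-1}$. Since $A_i^{-1}=\det(A_i)^{-1}\,\mathrm{adj}(A_i)$ and the adjugate has entries polynomial in those of $A_i$, this gives $\Gamma\subseteq\GL(m,R)$; moreover $R$ is a finitely generated commutative $\bZ$-algebra and an integral domain (it sits inside $\bC$). The crucial input is the purely algebraic fact that \emph{for every nonzero $r\in R$ there is a maximal ideal $\km\subset R$ with $r\notin\km$ and $R/\km$ finite}: indeed $R[r^{-1}]$ is again a nonzero finitely generated $\bZ$-algebra, so it has some maximal ideal $\kn$; by the Nullstellensatz over $\bZ$ its residue field $R[r^{-1}]/\kn$ is finite, and pulling back along $R\hookrightarrow R[r^{-1}]\twoheadrightarrow R[r^{-1}]/\kn$ yields a ring homomorphism from $R$ onto a finite domain, hence onto a finite field $\bF_q$, whose kernel $\km$ is a maximal ideal of $R$ not containing $r$ (the image of $r$ being a unit).

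Granting this, residual finiteness follows quickly. Let $\gamma\in\Gamma$ with $\gamma\neq\id$; then some entry $r=(\gamma-\Id)_{ij}\in R$ is nonzero. Choosing $\km$ as above, reduction modulo $\km$ gives a ring map $R\to\bF_q$ and hence a group homomorphism $\rho\colon\GL(m,R)\to\GL(m,\bF_q)$; restricting to $\Gamma$ gives $\rho|_\Gamma\colon\Gamma\to\GL(m,\bF_q)$ with image in a finite group, so $N:=\ker(\rho|_\Gamma)$ is a normal subgroup of finite index in $\Gamma$. Since the $(i,j)$ entry of $\rho(\gamma)-\Id$ is the nonzero class of $r$ in $\bF_q$, we have $\rho(\gamma)\neq\Id$, i.e.\ $\gamma\notin N$. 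As $\gamma$ was an arbitrary nontrivial element of $\Gamma$, this exhibits for each of them a finite-index normal subgroup avoiding it, which is exactly residual finiteness.

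The main obstacle is the commutative-algebra input above, and within it the finiteness of the residue field $R/\km$ — a form of the Nullstellensatz valid over $\bZ$ rather than over a field. I would either invoke it from a standard reference or prove it in two steps: first Zariski's lemma (a field finitely generated as an algebra over a subfield is a finite extension of it), then a case split on the characteristic of $R/\km$. In characteristic $p$ the lemma directly produces a finite extension of $\bF_p$; characteristic $0$ is excluded because $R/\km$ would then be a number field, and no number field — indeed not even $\bQ$ — is finitely generated as a $\bZ$-algebra, since any such presentation would invert only finitely many primes, so some prime would fail to be a unit. All remaining points (that $\Gamma\subseteq\GL(m,R)$, that $\rho$ is a well-defined group homomorphism, that $N$ has finite index dividing $|\GL(m,\bF_q)|$) are routine bookkeeping.
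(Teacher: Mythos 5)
The paper gives no argument of its own for this proposition: its ``proof'' is a citation to Ratcliffe, Section~7.6, where the Selberg--Mal'cev lemma is established by exactly the ring-theoretic reduction you describe. So your proposal is not so much a different route as a self-contained reconstruction of the intended one, and it is correct: the passage to the finitely generated domain $R\subset\bC$ containing the matrix entries and the $\det(A_i)^{-1}$, the key commutative-algebra fact that every nonzero $r$ in a finitely generated $\bZ$-algebra domain avoids some maximal ideal with finite residue field (via the Nullstellensatz over $\bZ$ applied to $R[r^{-1}]$), and the reduction $\GL(m,R)\to\GL(m,\bF_q)$ detecting any given $\gamma\neq\id$ together give residual finiteness. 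The only step you state in compressed form is the characteristic-zero exclusion: for a general number field $K=\bZ[x_1,\dots,x_n]$ the phrase ``the presentation inverts only finitely many primes'' should be unpacked as follows --- clearing denominators of the minimal polynomials makes each $x_i$ integral over $\bZ[1/N]$ for a single integer $N$, hence $\bQ\subset K$ is integral over $\bZ[1/N]$, and then $1/p\in\bZ[1/N]$ for every prime $p\nmid N$, a contradiction. This is standard and your sketch points at precisely this mechanism, so I would not call it a gap; filling it in (or citing Zariski's lemma together with this integrality remark) makes your write-up a complete proof and renders the paper's appeal to residual finiteness self-contained rather than outsourced to the reference.
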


\begin{proof}
    See \cite{MR1299730}*{Section~7.6}.
\end{proof}

The last result applies to the fundamental groups of compact locally symmetric spaces of noncompact type, such as compact hyperbolic manifolds. 
Thus, if \( M = S^{n-k-1} \times \Sigma^{k+1}\), where \( \Sigma^{k+1}\subset H^{k+1} \) is a compact hyperbolic manifold with \( k+1 \geqslant 2 \), then \(\pi_1(M) \cong \pi_1(\Sigma) \subset \mathrm{SO}(k+1,1)\)
is a cocompact lattice in a real Lie group, and hence residually finite. Consequently, \( M \) admits an infinite tower of finite regular coverings.
This algebraic input enables the geometric construction in our main results. 
By pulling back the conformal class along these coverings and minimizing the Yamabe functional on each \( M_j \), we obtain a sequence of conformal metrics whose lifts to the universal cover \( \widetilde{M} \cong S^n \setminus S^k \) are pairwise nonhomothetic. 
The residual finiteness of the fundamental group thus bridges the gap between the compact and noncompact settings in our construction.

\section{Proof of the main Result}
\label{sec:main-proof}
In this section, we prove our main result: after passing to a subtower if necessary, the periodic scalar curvature metrics constructed via finite coverings are pairwise nonhomothetic. 
We first establish a general result for compact manifolds whose conformal universal cover is not conformally equivalent to Euclidean space and whose fundamental group has infinite profinite completion. 
We then specialize to the case of the singular Yamabe problem on the punctured sphere.

\subsection{Nonhomothetic lifts in the general setting}
To establish that the conformal metrics arising in the tower are pairwise nonhomothetic, we exploit the interaction between topological coverings and variational problems for scalar curvature. 
The main observation is that, under appropriate assumptions on the fundamental group, one can construct a sequence of finite regular coverings with arbitrarily large volume. Since the scalar curvature is invariant under pullback by coverings, but the minimizers of the normalized Yamabe functional are sensitive to volume growth, this leads to a mechanism for producing conformal metrics with constant scalar curvature that are not homothetic to one another.
This approach hinges on the variational characterization of the Yamabe constant and the existence of minimizers. 
The central ingredient enabling the inductive construction is a topological lemma from \cite{MR3803113}, which ensures the existence of coverings with arbitrarily large volume whenever the fundamental group has infinite profinite completion:
\begin{propositionletter}[\cite{MR3803113}]
\label{bettiol-piccione}
Let \((M,g)\) be a compact $n$-dimensional Riemannian manifold with $n\geqslant 3$. 
Suppose that \(\pi_1(M)\) has infinite profinite completion.  
Then for any \(\mathrm{v} \in \mathbb{R}\) there exists a finite regular covering \(\pi \colon \cM \to M\) such that \(\operatorname{Vol}_{\pi^\ast g}(\widetilde{M}) > \mathrm{v}\).
\end{propositionletter}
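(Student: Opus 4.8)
The plan is to convert the algebraic hypothesis on $\pi_1(M)$ into a supply of finite regular coverings of arbitrarily large degree, and then to invoke the elementary fact that pulling a Riemannian metric back along a $d$-sheeted covering multiplies the total volume by $d$.

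First I would produce finite regular coverings of unbounded degree. Write $\Gamma := \pi_1(M)$; by hypothesis $\widehat{\Gamma}$ is infinite, equivalently $\Gamma$ has infinitely many finite-index normal subgroups. The point that needs an argument is that this forces the \emph{indices} of such subgroups to be unbounded. Given distinct finite-index normal subgroups $N_1, N_2, \dots \trianglelefteq \Gamma$, consider the descending chain of partial intersections $\Gamma_j := N_1 \cap \dots \cap N_j$, all finite-index and normal. If this chain stabilized at some $\Gamma_{j_0}$, then every $N_i$ would contain $\Gamma_{j_0}$ and hence correspond to one of the finitely many subgroups of the finite group $\Gamma/\Gamma_{j_0}$, contradicting infinitude. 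Passing to a subsequence we obtain a strictly descending tower
\[
\Gamma = \Gamma_0 \supsetneq \Gamma_1 \supsetneq \Gamma_2 \supsetneq \cdots
\]
of finite-index normal subgroups with $[\Gamma_{j-1}:\Gamma_j]\ge 2$ — exactly the tower already recalled in Remark~\ref{rmk:galois-coverings} — so that $d_j := [\Gamma:\Gamma_j] \ge 2^j \to \infty$. By the Galois correspondence for coverings (Remark~\ref{rmk:galois-coverings}), each $\Gamma_j$ determines a connected finite regular covering $\pi_j \colon M_j \to M$ of degree $d_j$.

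Next I would pull back the metric: set $g_j := \pi_j^\ast g$ on $M_j$, so that $\pi_j \colon (M_j, g_j)\to(M,g)$ is a local isometry and a $d_j$-sheeted covering. Covering $M$ by finitely many evenly covered charts (possible by compactness) with a subordinate partition of unity, each chart lifts to $d_j$ disjoint isometric copies in $M_j$, and integrating $\dvol_{g_j}$ gives
\[
\operatorname{Vol}_{g_j}(M_j) = \int_{M_j}\dvol_{g_j} = d_j\int_M \dvol_g = d_j\operatorname{Vol}_g(M).
\]
Since $M$ is compact, $\operatorname{Vol}_g(M)\in(0,\infty)$; so given $\mathrm{v}\in\mathbb{R}$ I would choose $j$ with $d_j > \mathrm{v}/\operatorname{Vol}_g(M)$ and set $\cM := M_j$, $\pi := \pi_j$, which gives a finite regular covering of $M$ with $\operatorname{Vol}_{\pi^\ast g}(\cM) = d_j\operatorname{Vol}_g(M) > \mathrm{v}$.

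The only step that genuinely requires care is the group theory in the first paragraph — that infinitely many finite-index normal subgroups really do yield subgroups of arbitrarily large index (bounded index would be compatible with only finitely many such subgroups). Everything afterwards is routine: the multiplicativity of volume under a Riemannian covering is standard, and the choice of $j$ is immediate. In fact, since the required tower is already supplied in Section~\ref{sec:preliminaries}, the whole argument can be written in a handful of lines once that input is cited.
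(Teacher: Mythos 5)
Your proof is correct. The paper does not actually prove this proposition---it simply cites \cite{MR3803113}*{Lemma 3.6}---and your argument is precisely the standard one underlying that citation: infinitely many finite-index normal subgroups force unbounded indices (your intersection/stabilization argument for this is the one step that genuinely needs justification, and it is handled correctly), and volume is multiplicative under a finite Riemannian covering of a compact base.
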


\begin{proof}
    See \cite{MR3803113}*{Lemma 3.6}.
\end{proof}

Lifting Yamabe minimizers to the conformal universal cover produces infinitely many representatives with constant scalar curvature for which no two conformal factors are constant multiples of one another.
To conclude that these representatives are in fact nonhomothetic uses the Ferrand--Obata theorem.

To explain this, let us first introduce some notation. 
By an \emph{infinite towering sequence} $\{\pi_j \colon M_j \to M_{j-1}\}_{j \in \bN}$ of finite connected coverings of degree $m_j \geqslant 2$, we mean a tower of nested Riemannian covering maps such that
\[
\dotsb \overset{\pi_{k+1}}{\longrightarrow} M_k \overset{\pi_k}{\longrightarrow} \dotsb \overset{\pi_2}{\longrightarrow} M_1 \overset{\pi_1}{\longrightarrow} M_0 := M \quad {\rm and} \quad m_j=\deg(\pi_j).
\]
For each $j \in \bN$, we set
\(\Pi_j := \pi_1 \circ \dotsb \circ \pi_j \colon M_j \to M \), which is itself a covering of degree
\(\deg(\Pi_j) = \prod_{\ell=1}^j m_\ell= m_1m_2\cdots m_j\).
We denote the universal covering by \(\Pi_\infty \colon {M}_\infty \to M \).
Since $\deg(\pi_j)=m_j\geqslant 2$ for each $j\in\mathbb{N}$, we see that $\Pi_\infty$ has infinite degree.
Hence $M_\infty$ is noncompact.
The main auxiliary result needed in our construction of nonhomothetic constant scalar curvature conformal metrics is as follows:
\begin{proposition}
    \label{construction-lemma}
    Let $(M,g)$ be an $n$-dimensional compact Riemannian manifold with $n \geqslant 3$ and such that $Y(M,[g]) > 0$.
    Suppose that there exists an infinite tower $\{\pi_j \colon M_j \to M_{j-1}\}_{j \in \bN}$ of finite connected coverings of degree $m_j \geqslant 2$ and a sequence $\{g_j\}_{j \in \bN}$ of unit volume Yamabe minimizers on $(M_j,[\Pi_j^\ast g])$ such that for each $j \in \bN$, there exists $\Phi_j \in \Diff(\cM)$ and $c_j \in \mathbb{R}_{>0}$ such that $\Phi_j^\ast \cpi^\ast g = c_j^2 \cpi_j^\ast g_j$, where $\cpi \colon \cM \to M$ and $\cpi_j \colon \cM \to M_j$ are the universal covers of $M$ and $M_j$, respectively.
    Then the universal cover $(\cM,\cpi^\ast g)$ is conformally equivalent to flat Euclidean space $(\mathbb{R}^n,\delta)$.
\end{proposition}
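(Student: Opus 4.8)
The plan is to prove that the conformal group $\Conf(\cM,[\cpi^\ast g])$ of the universal cover does \emph{not} act properly, and then to invoke the Ferrand--Obata theorem in the general form of \cites{MR1371767,MR0303464,MR1334876} (see Proposition~\ref{thm:obata-ferrand} and the surrounding discussion): a complete Riemannian $n$-manifold ($n\geqslant 3$) whose conformal group acts nonproperly is conformally equivalent to $(S^n,g_{S^n})$ or to $(\bR^n,\delta)$. Since $\pi_1(M)$ is infinite — it has subgroups of index $\prod_{\ell\leqslant j}m_\ell\to\infty$ — the universal covering $\cpi$ has infinitely many sheets, so $\cM$ is noncompact, while $\cpi^\ast g$ is complete as a covering pullback of the complete metric $g$; hence the round sphere is excluded and $(\cM,\cpi^\ast g)$ is conformally equivalent to $(\bR^n,\delta)$, which is the assertion. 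Two preliminary remarks set up the nonproperness argument. First, since each $g_j$ is a Yamabe minimizer it has constant scalar curvature, hence so does $\cpi_j^\ast g_j$, hence (by the hypothesis with $j=1$) so does $\Phi_1^\ast\cpi^\ast g$; therefore $\cpi^\ast g$, and hence $g$, has constant scalar curvature $R_g\equiv\rho$, with $\rho>0$ because $0<Y(M,[g])\leqslant\mathscr{Q}_g(1)=\rho\,\Vol_g(M)^{2/n}$. Equating constant scalar curvatures on the two sides of $\Phi_j^\ast\cpi^\ast g=c_j^2\,\cpi_j^\ast g_j$ gives $\rho=c_j^{-2}\,Y(M_j,[\Pi_j^\ast g])$ (the unit-volume normalization makes $R_{g_j}=Y(M_j,[\Pi_j^\ast g])$), and, since $Y(M_j,[\Pi_j^\ast g])\leqslant Y(S^n,[g_{S^n}])$ by Proposition~\ref{prop:Yamabe-existence}, the scaling constants are uniformly bounded: $c_j^{\,n}\leqslant C:=\bigl(Y(S^n,[g_{S^n}])/\rho\bigr)^{n/2}$. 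Second, since $g_j\in[\Pi_j^\ast g]$ and $\cpi=\Pi_j\circ\cpi_j$, we have $\cpi_j^\ast g_j\in[\cpi^\ast g]$, so $\Phi_j^\ast\cpi^\ast g\in[\cpi^\ast g]$; that is, $\Phi_j\in\Conf(\cM,[\cpi^\ast g])$.

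The core of the argument is a volume estimate on fundamental domains. Put $\Gamma=\pi_1(M)$ and $\Gamma_j=\pi_1(M_j)\leqslant\Gamma$, of index $N_j=\prod_{\ell\leqslant j}m_\ell\geqslant 2^j$; the group $\Gamma$ acts on $\cM$ cocompactly by $\cpi^\ast g$-isometries (the deck transformations of $\cpi$), and $\Gamma_j$ by $\cpi_j^\ast g_j$-isometries (those of $\cpi_j$). Fix a relatively compact Borel fundamental domain $D\subset\cM$ for $\Gamma$ and a point $p_0\in D$. If $\gamma_1,\dots,\gamma_{N_j}$ represent the cosets of $\Gamma_j$ in $\Gamma$, then $D_j:=\bigsqcup_{i=1}^{N_j}\gamma_i D$ is a fundamental domain for $\Gamma_j$, so $\sum_{i=1}^{N_j}\Vol_{\cpi_j^\ast g_j}(\gamma_i D)=\Vol_{\cpi_j^\ast g_j}(D_j)=\Vol_{g_j}(M_j)=1$; hence some $\eta_j\in\{\gamma_1,\dots,\gamma_{N_j}\}$ satisfies $\Vol_{\cpi_j^\ast g_j}(\eta_j D)\leqslant 1/N_j$. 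Choose $\delta_j\in\Gamma$ with $\delta_j\bigl(\Phi_j(\eta_j(p_0))\bigr)\in\overline D$ (possible as $\Gamma\overline D=\cM$) and set $\Lambda_j:=\delta_j\circ\Phi_j\circ\eta_j\in\Conf(\cM,[\cpi^\ast g])$. Then $\Lambda_j(\overline D)\cap\overline D\neq\emptyset$, and, using that $\delta_j$ is a $\cpi^\ast g$-isometry and $\Phi_j^\ast\cpi^\ast g=c_j^2\,\cpi_j^\ast g_j$,
\[
\int_D dV_{\Lambda_j^\ast\cpi^\ast g}=\Vol_{\cpi^\ast g}\bigl(\Lambda_j(D)\bigr)=\Vol_{\cpi^\ast g}\bigl(\Phi_j(\eta_j D)\bigr)=c_j^{\,n}\,\Vol_{\cpi_j^\ast g_j}(\eta_j D)\leqslant\frac{C}{N_j}\longrightarrow 0.
\]

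Suppose, for contradiction, that $\Conf(\cM,[\cpi^\ast g])$ acts properly. Then $\{\Psi\in\Conf(\cM,[\cpi^\ast g]):\Psi(\overline D)\cap\overline D\neq\emptyset\}$ is relatively compact, so a subsequence $\Lambda_{j_k}$ converges in $\Conf(\cM,[\cpi^\ast g])$; by the regularity theory for conformal transformations that underlies the Ferrand--Obata theorem (cf.\ \cite{MR1371767}), this convergence is $C^\infty_{\mathrm{loc}}$ and the limit $\Lambda_\infty$ is a conformal diffeomorphism. Hence $dV_{\Lambda_{j_k}^\ast\cpi^\ast g}\to dV_{\Lambda_\infty^\ast\cpi^\ast g}$ uniformly on the compact set $\overline D$, so $\int_D dV_{\Lambda_{j_k}^\ast\cpi^\ast g}\to\int_D dV_{\Lambda_\infty^\ast\cpi^\ast g}=\Vol_{\cpi^\ast g}\bigl(\Lambda_\infty(D)\bigr)>0$ because $\Lambda_\infty(D)$ has nonempty interior. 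This contradicts the displayed limit, so $\Conf(\cM,[\cpi^\ast g])$ does not act properly, and the proposition follows from the first paragraph.

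The main difficulty — and the reason for modifying $\Phi_j$ by deck transformations on both sides — is that one must produce conformal automorphisms that simultaneously return a fixed compact set to itself \emph{and} have conformal factors degenerating to zero, two requirements that work against each other for a single $\Phi_j$. This is handled by the elementary fact that composing $\Phi_j$ with deck transformations leaves its (constant) scalar curvature untouched and changes its conformal factor only by an isometry: the translation $\eta_j$ localizes the ``average smallness'' $c_j^{\,n}/N_j$ of the conformal factor over the huge fundamental domain $D_j$ to the fixed fundamental domain $D$, while the second translation $\delta_j$ forces the return. The one technical point requiring care is the upgrade of compact-open convergence in $\Conf(\cM,[\cpi^\ast g])$ to $C^\infty_{\mathrm{loc}}$ convergence of the maps — hence of the pulled-back volume forms — which is standard for conformal transformation groups but should be cited explicitly.
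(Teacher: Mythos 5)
Your proof is correct and follows essentially the same strategy as the paper's: conjugate $\Phi_j$ on both sides by deck transformations of $\cpi$ to obtain conformal automorphisms of $(\cM,[\cpi^\ast g])$ that return a fixed fundamental domain to itself while the volume of its image collapses, conclude nonproperness of the conformal group, and finish with Ferrand--Obata plus noncompactness of $\cM$. The only cosmetic differences are that you bound the constants $c_j$ explicitly via the constant scalar curvature identity (where the paper instead exploits the scale invariance of the normalized total scalar curvature functional, so the $c_j$ never appear), and your estimate $\Vol_{\cpi_j^\ast g_j}(\eta_j D)\leqslant 1/N_j$ with $N_j=\deg\Pi_j\to\infty$ is the correct form of the bound that the paper records as $m_j^{-1}$.
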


\begin{proof}
Since the Yamabe constant is positive and the statement depends only on the conformal class, we may assume without loss of generality that $R_g = Y(M,[g])$ and ${\rm Vol}_g(M) = 1$.
Also, using that $\lim_{j\to\infty}{\rm deg}(\Pi_j)=\infty$, it follows that the universal cover $\cM$ is noncompact.

Fix $j \in \bN$ and note that $\cpi = \Pi_j \circ \cpi_j$.
Let $F \subset \cM$ be a fundamental domain for $\cpi$ and $\Aut(\cpi), \Aut(\cpi_j)$ be the groups of deck transformations of $\cpi,\cpi_j$, respectively.
Using that $\Aut(\cpi_j)$ has finite index in $\Aut(\cpi)$, there exists $\tau_j \in \Aut(\cpi)$ such that
\[
    {\rm Vol}_{\tau_j^\ast \cpi_j^\ast g_j}(F) = \min \left\{ {\rm Vol}_{\sigma^\ast \cpi_j^\ast g_j}(F) \suchthat \sigma \in \Aut(\cpi) \right\}.
\]
This yields 
\[
{\rm Vol}_{\tau_j^\ast \cpi_j^\ast g_j}(F) \leqslant m_j^{-1}.
\]
    
We choose $\sigma_j \in \Aut(\cpi)$ such that $\Psi_j := \sigma_j \circ \Phi_j \circ \tau_j$ satisfies $\Psi_j(F) \cap F \not= \emptyset$.
Observing that $\bar{g}_j:=\Psi_j^\ast\cpi^\ast g = c_j^2 \tau_j^\ast \cpi_j^\ast g_j$, we compute the total scalar curvature of $\bar{g}_j\rvert_F$ defined as 
\[
    \mathcal{A}(\bar{g}_j) := {{\rm Vol}_{\Psi_j^\ast \cpi^\ast g}(F)^{\frac{2-n}{n}}}{\int_F R_{\Psi_j^\ast \cpi^\ast g} \ud{\rm Vol}_{\Psi_j^\ast \cpi^\ast g}}
\]
in two ways as follows.
    
On the one hand, using $R_g = Y(M,[g])$ and the diffeomorphism invariance of the scalar curvature under pullback, we obtain
\begin{equation}
    \label{eqn:mI-scalar-Psi}
    \mathcal{A}(\bar{g}_j) = Y(M,[g]){\rm Vol}_{\cpi^\ast g}(\Psi_j(F))^{\frac{2}{n}}.
\end{equation}
    
On the other hand, since $\mathcal{A}$ is scale invariant and each $g_j$ minimizes $Y(M_j,[\Pi_j^\ast g])$ with unit volume, we find that
\[
    \mathcal{A}(\bar{g}_j) = Y(M_j,[\Pi_j^\ast g]) {\rm Vol}_{\tau_j^\ast \cpi_j^\ast g_j}(F)^{\frac{2}{n}} \leqslant Y(M_j,[\Pi_j^\ast g]) m_j^{-\frac{2}{n}}.
\]
Thus, $\limsup_{j \to \infty} \mathcal{A}(\bar{g}_j) \leqslant 0$.
Since $Y(M,[g]) > 0$, it follows from \eqref{eqn:mI-scalar-Psi} that ${\rm Vol}_{\cpi^\ast g}(\Psi_j(F)) \to 0$ as $j\to\infty$.
Therefore, the sequence of maps $\{\Psi_j\}_{j\in\mathbb N}\subset \Aut(\cpi)$ cannot be precompact in the compact-open topology.
From this, we conclude that the conformal group of $(\cM, \cpi^\ast g)$ is nonproper.
Since $\cM$ is noncompact, Proposition~\ref{thm:obata-ferrand} implies that $(\cM, \cpi^\ast g)$ is conformally equivalent to flat Euclidean space $(\mathbb{R}^n, \delta)$.
\end{proof}

To prove our main result, we first show that if $(M,g)$ has positive Yamabe constant and $\pi_1(M)$ has infinite profinite completion, then for each $N\in\mathbb{N}$, there exists a finite regular covering $\Pi_N:M_N\to M$ such that $\#\mathcal{M}_{\rm geom}(M_N,[\Pi^*_Ng])\geqslant N$.
This also shows that the analytic moduli space of the conformal universal cover is infinite. 

\begin{proposition}
\label{prop:PDE-moduli}
Let $(M, g)$ be a compact $n$-dimensional Riemannian manifold with $n\geqslant 3$ such that $Y(M, [g]) > 0$.
Suppose that $\pi_1(M)$ has infinite profinite completion.
Then there exists an infinite tower of finite regular coverings
$\{\pi_j \colon M_j \to M_{j-1} \}_{j \in \bN}$
and a sequence of unit-volume Yamabe minimizers \( \{ \bar{g}_j \in [\Pi_j^\ast g] \}_{j\in\mathbb{N}} \), where \( \Pi_j := \pi_1 \circ \dotsb \circ \pi_j \colon M_j \to M \), such that for every \( j \in \mathbb{N}_0 \), the pullbacks
\(\{ (\Pi_j^\ell)^\ast \bar{g}_\ell\}_{\ell = 0}^j \subset [\Pi_j^\ast g]
\)
are pairwise nonhomothetic constant scalar curvature metrics, where $\Pi_j^\ell:=\pi_{\ell+1}\circ\cdots\circ \pi_j$ with the convention $\Pi^j_j={\rm Id}$. 
In particular, the conformal universal cover $(\cM,\cg)$ of $(M,g)$ satisfies  
\[\#\mathcal{M}_{\mathrm{PDE}}(\cM)\gtrsim\aleph_0.\]
\end{proposition}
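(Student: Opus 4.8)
The plan is to build the tower by induction on the level, arranging at each step that a single scale- and diffeomorphism-invariant quantity---the normalized total scalar curvature functional $\mathcal{A}$ from \eqref{eq:EH-functional}---takes pairwise distinct values on the relevant pullback metrics. Suppose a partial tower $\pi_1,\dots,\pi_\ell$ and unit-volume Yamabe minimizers $\bar g_0,\dots,\bar g_\ell$, with $\bar g_m\in[\Pi_m^\ast g]$, have already been chosen. A unit-volume Yamabe minimizer has constant scalar curvature equal to its Yamabe constant, so $R_{\bar g_m}\equiv Y(M_m,[\Pi_m^\ast g])$; consequently, for $0\le m\le j$ the pullback $(\Pi_j^m)^\ast\bar g_m\in[\Pi_j^\ast g]$ has constant scalar curvature $Y(M_m,[\Pi_m^\ast g])$ and volume $\deg(\Pi_j^m)=\deg(\Pi_j)/\deg(\Pi_m)$, whence
\[
\mathcal{A}\bigl((\Pi_j^m)^\ast\bar g_m\bigr)=\deg(\Pi_j)^{2/n}\,a_m,\qquad a_m:=Y(M_m,[\Pi_m^\ast g])\,\deg(\Pi_m)^{-2/n}.
\]
Since $\mathcal{A}$ is invariant under homotheties, two of these metrics can be homothetic only if $a_m=a_{m'}$, so it suffices to build the tower so that $\{a_m\}_{m\ge 0}$ is strictly decreasing (hence injective).

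For the inductive step: first, $a_{\ell+1}>0$ regardless of the choice of $\pi_{\ell+1}$, since positivity of the Yamabe constant is inherited by coverings (pull back a conformal metric of positive scalar curvature on $M$). Next, $\pi_1(M_\ell)$ has finite index in $\pi_1(M)$, so it again has infinite profinite completion; Proposition~\ref{bettiol-piccione} applied to $(M_\ell,\Pi_\ell^\ast g)$ then produces, for any prescribed volume, a finite regular covering $\pi_{\ell+1}\colon M_{\ell+1}\to M_\ell$ with ${\rm Vol}_{\Pi_{\ell+1}^\ast g}(M_{\ell+1})$---equivalently $\deg(\Pi_{\ell+1})$---as large as we please, and Proposition~\ref{prop:Yamabe-existence} supplies a unit-volume Yamabe minimizer $\bar g_{\ell+1}\in[\Pi_{\ell+1}^\ast g]$. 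Since $Y(M_{\ell+1},[\Pi_{\ell+1}^\ast g])\le Y(S^n,[g_{S^n}])$ by the Aubin bound in Proposition~\ref{prop:Yamabe-existence}, we get $a_{\ell+1}\le Y(S^n,[g_{S^n}])\,\deg(\Pi_{\ell+1})^{-2/n}$, which tends to $0$ as $\deg(\Pi_{\ell+1})\to\infty$; choosing the degree large enough forces $a_{\ell+1}<a_\ell$, completing the induction.

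With the tower so chosen, fix $j$: the metrics $(\Pi_j^m)^\ast\bar g_m$, $m=0,\dots,j$, are constant scalar curvature metrics in $[\Pi_j^\ast g]$ whose $\mathcal{A}$-values $\deg(\Pi_j)^{2/n}a_m$ are pairwise distinct, hence pairwise nonhomothetic; this is the main assertion, and it also yields $\#\mathcal{M}_{\mathrm{geom}}(M_j,[\Pi_j^\ast g])\ge j+1$. For the conclusion about the universal cover, write $\cg:=\cpi^\ast g$ and let $\cpi_j\colon\cM\to M_j$ be the universal covering, so that $\cpi=\Pi_j\circ\cpi_j$; the metrics $\cpi_j^\ast(\Pi_j^m)^\ast\bar g_m\in[\cg]$ ($m=0,\dots,j$) then have constant scalar curvature. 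If two of them, for $m\ne m'$, were constant multiples of one another, then, $\cpi_j$ being a surjective local diffeomorphism, the same proportionality would descend to $(\Pi_j^m)^\ast\bar g_m$ and $(\Pi_j^{m'})^\ast\bar g_{m'}$ on $M_j$, contradicting the previous line; hence they represent $j+1$ distinct classes in $\mathcal{M}_{\mathrm{PDE}}(\cM)$. Letting $j\to\infty$ gives $\#\mathcal{M}_{\mathrm{PDE}}(\cM)\gtrsim\aleph_0$.

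The crux is the second paragraph: the tension between the uniform bound $Y(M_{\ell+1},[\Pi_{\ell+1}^\ast g])\le Y(S^n,[g_{S^n}])$ and the unbounded degrees furnished by the profinite hypothesis is precisely what drives the $a_m$ monotonically downward, and this is the only place the hypothesis on $\pi_1(M)$ is really used. The remaining points---that positivity of the Yamabe constant and infinite profinite completion both pass to finite-index subgroups and finite covers, and that proportionality of metrics descends along a covering map---are routine; note in particular that although an arbitrary homothety of the noncompact cover $(\cM,\cg)$ need not descend to $M_j$, the constant rescalings relevant to $\mathcal{M}_{\mathrm{PDE}}$ do, which is all that this proposition requires. (Resolving the stronger nonhomothety of the lifts on $\cM$ itself is exactly what the Ferrand--Obata theorem, via Proposition~\ref{construction-lemma}, is for in the proof of the main theorem.)
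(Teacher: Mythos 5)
Your proof is correct and follows essentially the same route as the paper: an inductive construction of the tower in which Aubin's bound $Y(M_{j+1},[\Pi_{j+1}^\ast g])\leqslant Y(S^n,[g_{S^n}])$ together with the arbitrarily large covering degrees from Proposition~\ref{bettiol-piccione} forces the homothety invariant $\mathcal{A}$ (equivalently your normalized quantities $a_m$) to be strictly decreasing along the tower, which is precisely the paper's choice \eqref{eqn:pick-Vj}. Your explicit remarks that positivity of the Yamabe constant and infinite profinite completion pass to finite covers, and that proportionality of pulled-back metrics descends along the universal covering, are points the paper treats implicitly, so nothing is missing.
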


\begin{proof}
 Let $\bar{g}_0 \in [g]$ be a unit-volume Yamabe minimizer.
 Denote the identity map by $\Pi_0 \colon M \to M$.

 Suppose that a finite regular covering $\Pi_j \colon M_j \to M$ and a unit-volume Yamabe minimizer $\bar{g}_j \in [\Pi_j^\ast g]$ are given.
 Since $\Pi_j$ is finite, $Y(M_j,[\Pi_j^\ast g])>0$.
 Let ${\rm v}_j > 0$ be such that
 \begin{equation}
     \label{eqn:pick-Vj}
     {\rm v}_j > \left( \frac{Y(S^n,[g_{S^n}])}{Y(M_j,[\Pi_j^\ast g])} \right)^{\frac{n}{2}}.
 \end{equation}
 Using Proposition~\ref{bettiol-piccione}, we can choose a finite regular covering $\pi_{j+1} \colon M_{j+1} \to M_j$ such that
 \begin{equation*}
     {\rm Vol}_{\pi_{j+1}^\ast \bar{g}_j}(M_{j+1}) > {\rm v}_j.
 \end{equation*}
 Set $\Pi_{j+1} := \Pi_j \circ \pi_{j+1} \colon M_{j+1} \to M$.
 By Proposition~\ref{prop:Yamabe-existence}, we may pick a unit-volume Yamabe minimizer $\bar{g}_{j+1}\in [\Pi_{j+1}^*g]$.
 Combining \eqref{eqn:pick-Vj} with the definition of $\pi_{\ell}$ yields
 \begin{equation*}
     \mathcal{A}(\pi_{j+1}^\ast\bar{g}_j) = Y(M_j,[\Pi_j^\ast g])\Vol_{\pi_{j+1}^\ast\bar{g}_j}(M_{j+1})^{\frac{2}{n}} > Y(S^n,[g_{S^n}]) \geq \mathcal{A}(\bar{g}_{j+1}) .
 \end{equation*}
 
 The above construction yields an infinite tower of finite regular coverings $\{ \pi_j \colon M_j \to M_{j-1} \}_{j\in\mathbb{N}}$ and a sequence of unit-volume Yamabe minimizers $\{ \bar{g}_j \in [\Pi_j^\ast g] \}_{j\in\mathbb{N}}$ such that $\mathcal{A}(\bar{g}_{j+1}) < \mathcal{A}(\pi_{j+1}^\ast\bar{g}_j)$ for all $j \in \mathbb{N}$.
 Since the metrics $\bar{g}_j$ all have constant scalar curvature, we deduce that if $0 \leq \ell \leq j$, then
 \begin{equation*}
    \mathcal{A}({(\Pi_j^{\ell-1})^\ast \bar{g}_{\ell-1}}) = \mathcal{A}({(\Pi_j^\ell)^\ast \pi_\ell^\ast \bar{g}_{\ell-1}}) =  \mathcal{A}({\pi_\ell^\ast \bar{g}_{\ell-1}})\left( \deg \Pi_j^\ell \right)^{\frac{2}{n}} > \mathcal{A}({\bar{g}_\ell})\left( \deg \Pi_j^\ell \right)^{\frac{2}{n}} = \mathcal{A}({(\Pi_j^\ell)^\ast \bar{g}_\ell}).
 \end{equation*}
 Therefore,
 \[
 \mathcal{A}({\bar{g}_j}) < \mathcal{A}({(\Pi_j^{j-1})^\ast \bar{g}_{j-1}}) < \dotsm < \mathcal{A}({(\Pi_j^0)^\ast \bar{g}_0}).
 \]
 Since each $M_j$ is compact, the scale and diffeomorphism invariance of the total scalar curvature implies that the metrics $\{(\Pi_j^\ell)^*\bar{g}_\ell\}_{\ell=0}^j$ are pairwise nonhomothetic.
 Hence, their conformal factors are distinct modulo a constant rescaling, a property that is preserved under pullback to the universal cover.
\end{proof}

We now turn to a deeper phenomenon. 
The previous result ensures that many distinct conformal factors exist that solve the Yamabe equation. 
However, these could, in principle, become equivalent upon lifting to the universal cover, where the isometry group can be much larger.
The following result shows that, under a natural geometric assumption, 
these conformal factors yield genuinely distinct metrics in the geometric moduli space.
\begin{theorem}
\label{thm:geom-moduli}
Let $(M^n,g)$ be a compact $n$-dimensional Riemannian manifold with \( n \geq 3 \) and such that \( Y(M,[g]) > 0 \). 
Suppose that \( \pi_1(M) \) has infinite profinite completion and the universal cover \( (\cM, \cpi^\ast g) \) is not conformally equivalent to Euclidean space. 
Then there exists an infinite tower of finite regular coverings
$\{\pi_j \colon M_j \to M_{j-1}\}_{j \in \bN}$ and a sequence of unit-volume Yamabe minimizers \( \{ g_j \in [\Pi_j^\ast g] \}_{j\in\mathbb{N}}\) such that their lifts \( \{ \cpi_j^\ast g_j \}_{j \in \mathbb{N}} \) to the universal cover are pairwise nonhomothetic. 
In particular, one has 
\[\#\mathcal{M}_{\mathrm{Geom}}(\cM)\gtrsim\aleph_0.\]
\end{theorem}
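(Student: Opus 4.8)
The plan is to feed the tower produced by Proposition~\ref{prop:PDE-moduli} into the rigidity mechanism of Proposition~\ref{construction-lemma}. First I would invoke Proposition~\ref{prop:PDE-moduli} to obtain an infinite tower $\{\pi_j\colon M_j\to M_{j-1}\}_{j\in\bN}$ of finite regular coverings together with unit-volume Yamabe minimizers $\{g_j\in[\Pi_j^\ast g]\}_{j\in\mathbb{N}_0}$ whose pullbacks to each $M_j$ are pairwise nonhomothetic CSC metrics; one may moreover take this tower so that each $\Pi_j\colon M_j\to M$ is regular (by running the construction inside a descending chain of finite-index normal subgroups of $\pi_1(M)$, as in Section~\ref{sec:preliminaries}), which guarantees that every covering appearing in a subtower is again regular. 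Each lift $\cpi_j^\ast g_j$ is then a complete CSC metric on $\cM$ lying in the single conformal class $[\cpi^\ast g]$, because $g_j\in[\Pi_j^\ast g]$ and pullback along a covering preserves completeness, scalar curvature, and conformal class. It remains only to extract a subtower along which these lifts are pairwise nonhomothetic.

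Introduce the equivalence relation on $\mathbb{N}_0$ given by $\ell\approx j$ if and only if $\cpi_\ell^\ast g_\ell$ and $\cpi_j^\ast g_j$ are homothetic as metrics on $\cM$. The decisive claim is that every $\approx$-class is finite. Indeed, suppose some class $C=\{j_0<j_1<j_2<\dotsm\}$ were infinite, and consider the subtower $\{M_{j_i}\to M_{j_{i-1}}\}_{i\in\bN}$: it is an infinite tower of finite connected coverings of degree $\ge2$; its base $(M_{j_0},g_{j_0})$ has $Y(M_{j_0},[\Pi_{j_0}^\ast g])>0$ (a finite cover of a positive-Yamabe manifold); each $g_{j_i}$ is a unit-volume Yamabe minimizer on $(M_{j_i},[\Pi_{j_i}^\ast g])$, and $[\Pi_{j_i}^\ast g]$ is precisely the pullback of $[g_{j_0}]$ along $M_{j_i}\to M_{j_0}$ (since $g_{j_0}\in[\Pi_{j_0}^\ast g]$ and $\Pi_{j_i}$ factors through $\Pi_{j_0}$); and, crucially, $j_0\approx j_i$ asserts exactly that $\cpi_{j_0}^\ast g_{j_0}$ and $\cpi_{j_i}^\ast g_{j_i}$ are homothetic on $\cM$. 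These are exactly the hypotheses of Proposition~\ref{construction-lemma} applied with $M_{j_0}$ as base manifold and $g_{j_0}$ as base metric, so that $\cpi_{j_0}^\ast g_{j_0}$ plays the role of $\cpi^\ast g$ there; that proposition then forces $(\cM,\cpi_{j_0}^\ast g_{j_0})$, hence $(\cM,\cpi^\ast g)$ (since the two metrics are conformal), to be conformally equivalent to flat Euclidean space, contradicting the hypothesis of the theorem. This proves the claim.

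With all $\approx$-classes finite there are infinitely many of them, so I can build indices $0=:j_0<j_1<j_2<\dotsm$ inductively: at stage $i$, choose $j_i$ strictly larger than every element of the finite set $[j_0]\cup\dotsb\cup[j_{i-1}]$, so that $j_i$ lies in no earlier class and hence no two of the $j_i$ are $\approx$-equivalent. Reindexing, the subtower $\{M_{j_i}\to M_{j_{i-1}}\}_{i\in\bN}$ (regular, by our choice of the original tower) together with the minimizers $\{g_{j_i}\}_{i\in\mathbb{N}_0}$ is an infinite tower of finite regular coverings starting at $M_{j_0}=M$, carries unit-volume Yamabe minimizers in the appropriate conformal classes, and has lifts $\{\cpi_{j_i}^\ast g_{j_i}\}_{i\in\bN}$ to $\cM$ that are pairwise nonhomothetic by construction. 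Since these give infinitely many pairwise nonhomothetic complete CSC metrics in $[\cpi^\ast g]$, we conclude $\#\mathcal{M}_{\mathrm{Geom}}(\cM)\gtrsim\aleph_0$.

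I expect the crux to be the re-basing in the second paragraph: although Proposition~\ref{construction-lemma} is phrased for a fixed base $(M,g)$ and demands that \emph{every} selected lift be homothetic to the single metric $\cpi^\ast g$, applying it over $M_{j_0}$ with $g_{j_0}$ as base metric is exactly what converts the weaker information ``the lifts indexed by the infinite class $C$ are mutually homothetic'' into the precise hypothesis required. The remaining points --- invariance of the conformal class, completeness, and scalar curvature under covering pullbacks, the degree and regularity bookkeeping for subtowers, and the inductive extraction --- are routine.
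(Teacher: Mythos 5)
Your proof is correct and follows essentially the same route as the paper: feed the tower from Proposition~\ref{prop:PDE-moduli} into Proposition~\ref{construction-lemma} and derive a contradiction with the Ferrand--Obata hypothesis. If anything, your version is more careful than the paper's terse argument, since you make explicit both the extraction step (negating the conclusion yields an infinite homothety class because homothety is an equivalence relation) and the re-basing of Proposition~\ref{construction-lemma} at $M_{j_0}$ with base metric $g_{j_0}$, both of which the paper leaves implicit.
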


\begin{proof}
By Proposition~\ref{prop:PDE-moduli}, there exists an infinite tower \( \{ \pi_j \colon M_j \to M_{j-1} \}_{j\in\mathbb{N}} \) and metrics \( \{ g_j \}_{j\in \mathbb{N}} \) such that \( g_j \in [\Pi_j^\ast g] \), \( {\rm vol}_{g_j}(M_j) = 1 \), and \( R_{g_j} = Y_j(M_j, [g_j]) \). 
Now, let \( \cpi_j \colon \cM \to M_j \) be the universal coverings.
Suppose by contradiction that there is a subtower, still denoted $\{\Pi_j\}_{j\in\bN}$, such that for each $j,\ell\in\mathbb{N}$ there is a diffeomorphism $\Phi_{j,\ell}$ and a constant $c_{j,\ell}>0$ such that \(\Phi_{j,\ell}^*(\widetilde{\Phi}_{j}^*g_j)=c_{j,\ell}^2\widetilde{\Phi}_{j}^*g_j\). 
We deduce from Proposition~\ref{construction-lemma} that \( (\cM, \cpi^\ast g) \) is conformally equivalent to Euclidean space, contradicting the hypothesis. 
The conclusion readily follows.
\end{proof}

\subsection{Application to the singular Yamabe problem on the sphere}
Let us now apply Theorem~\ref{thm:geom-moduli} to the singular Yamabe problem. 

As a first step toward applying our general nonhomotheticity result to the case of singular Yamabe metrics on punctured spheres, we must verify that the initial compact manifold has a positive Yamabe constant and its conformal cover is noncompact and not conformally equivalent to Euclidean space.
This is elementary for the product of a round sphere with a hyperbolic manifold.

\begin{remark}
In the case $k=1$, by compact hyperbolic manifold $(\Sigma^1,g_{\Sigma^1})$, we mean $(S^1,g_{S^1})$.
\end{remark}

\begin{lemma}
\label{lem:positive-yamabe-nonround}
Let $k\in \mathbb{Z}_{\geqslant 0}$ and  \( n \geqslant 2k+3 \) and 
\((M, g) := ( S^{n-k-1} \times \Sigma^{k+1}, g_{S^{n-k-1}} \oplus g_\Sigma )
\) be a Riemannian product with \( \Sigma^{k+1}\subset H^{k+1} \) a compact hyperbolic manifold.
Then $Y(M, [g])>0$.
Moreover, the conformal universal cover \( (\cM, \pi^*g) \) is not the Euclidean space.
\end{lemma}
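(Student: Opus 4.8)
The plan is to verify the two assertions separately, both by elementary direct computation. First, to see that $Y(M,[g])>0$, I would exhibit an explicit positive element of the conformal class with positive scalar curvature, which by the sign-definiteness of the Yamabe quotient forces $Y(M,[g])>0$. The natural candidate is the product metric $g$ itself: the scalar curvature of a Riemannian product is the sum of the scalar curvatures of the factors, so $R_g = R_{g_{S^{n-k-1}}} + R_{g_\Sigma} = (n-k-1)(n-k-2) - (k+1)k$. This is strictly positive precisely when $n \geqslant 2k+3$ (indeed $(n-k-1)(n-k-2) > k(k+1)$ is equivalent, after setting $m = n-k-1 \geqslant k+2$, to $m(m-1) > k(k+1)$, which holds since $m \geqslant k+2$). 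Since $R_g$ is a positive constant, plugging $u \equiv 1$ into $\mathscr{Q}_g$ gives a positive value, hence $Y(M,[g]) \geqslant 0$; and in fact $Y(M,[g])>0$ because the sign of the Yamabe constant agrees with the sign of the scalar curvature of any constant-scalar-curvature representative (equivalently, because the first eigenvalue of the conformal Laplacian $L_g$ is positive when $R_g>0$). Alternatively, one can simply invoke that $g$ itself is a constant-positive-scalar-curvature metric in $[g]$, so the conformal class is of positive Yamabe type.

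Second, I must show the conformal universal cover $(\widetilde M, \widetilde\pi^\ast g)$ is not conformally equivalent to Euclidean space. The universal cover of $M = S^{n-k-1}\times\Sigma^{k+1}$ is $\widetilde M = S^{n-k-1}\times H^{k+1}$ with the product metric $g_{S^{n-k-1}}\oplus g_{H^{k+1}}$, which is exactly the standard model $(S^n\setminus S^k, g_{S^n})$ (up to conformal equivalence) recalled in the introduction. To rule out $(\widetilde M,\widetilde\pi^\ast g)\cong(\mathbb R^n,\delta)$ conformally, I would argue topologically: $\widetilde M$ is not contractible when $n-k-1\geqslant 1$, since it retracts onto $S^{n-k-1}$, so $H^{n-k-1}(\widetilde M;\mathbb Z)\neq 0$ whereas $\mathbb R^n$ is contractible; a conformal equivalence is in particular a homeomorphism, which is impossible. (Here $n-k-1\geqslant 1$ holds since $n\geqslant 2k+3 \geqslant k+2$, in fact $n-k-1\geqslant k+2\geqslant 2$.) One should also note $\widetilde M$ is noncompact since $H^{k+1}$ is noncompact, consistent with the setup.

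The argument is entirely routine; there is no serious obstacle. The only point requiring a moment of care is the arithmetic inequality $(n-k-1)(n-k-2) > k(k+1)$ under the hypothesis $n\geqslant 2k+3$, and the degenerate cases: when $k=0$ the factor $\Sigma^1 = S^1$ is flat, so $R_g = (n-1)(n-2) > 0$ for $n\geqslant 3$ with no hyperbolic contribution, and the cover $S^{n-1}\times\mathbb R$ is again non-contractible; when $n = 2k+3$ exactly, the sphere factor has dimension $n-k-1 = k+2$ and $(k+2)(k+1) > k(k+1)$ strictly, so positivity is not borderline. I would also remark that this lemma supplies precisely the three hypotheses ($Y(M,[g])>0$, $\pi_1(M)$ residually finite with infinite profinite completion via the Selberg--Mal'cev lemma applied to the cocompact lattice $\pi_1(\Sigma)\subset \mathrm{SO}(k+1,1)$, and $\widetilde M$ not conformal to $\mathbb R^n$) needed to invoke Theorem~\ref{thm:geom-moduli}, from which Theorem~\ref{thm:main} follows immediately upon identifying $\widetilde M \cong S^n\setminus S^k$.
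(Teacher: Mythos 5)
Your proposal is correct and follows essentially the same route as the paper: compute $R_g=(n-k-1)(n-k-2)-k(k+1)=(n-2k-2)(n-1)>0$ to conclude positivity of the Yamabe constant, and rule out conformal equivalence with $\mathbb{R}^n$ by noting that the universal cover $S^{n-k-1}\times\mathbb{R}^{k+1}$ is not contractible. Your added justifications (the sign of $Y(M,[g])$ via the first eigenvalue of $L_g$, and the explicit check of the borderline and $k=0$ cases) only flesh out steps the paper leaves terse.
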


\begin{proof}
Observe that 
\[
R_g=R_{g_{S^{n-k-1}}}+R_{g_{H^{k+1}}}=(n-k-1)(n-k-2)-k(k+1)=(n-2k-2)(n-1)
\]
Since $n\geqslant 2k+3$, we see that $R_g>0$.
In addition, by compactness of $M$, we get $Y(M,[g])>0$.

Finally, the universal cover $\cM$ is diffeomorphic to $S^{n-k-1}\times \mathbb{R}^{k+1}$.
Since this is not contractible, the conformal universal cover is not the Euclidean space.
\end{proof}

We now prove our main result:
\begin{proof}[Proof of Theorem~\ref{thm:main}]
As in Lemma~\ref{lem:positive-yamabe-nonround}, let us consider the Riemannian product
\[
(M, g) := (S^{n-k-1} \times \Sigma^{k+1}, g_{S^{n-k-1}} \oplus g_\Sigma),
\]
where \( \Sigma^{k+1}\subset H^{k+1} \) is a compact hyperbolic manifold of dimension \( k\geqslant 1 \). 
Lemma~\ref{lem:positive-yamabe-nonround} implies that \( (M, g) \) has a positive Yamabe constant, but its conformal universal cover is not the Euclidean space.

By construction, the fundamental group of \( M \) is isomorphic to \( \pi_1(\Sigma) \), which is a cocompact lattice in \( \mathrm{SO}(k+1,1) \). In particular, \( \pi_1(M) \) is a finitely generated linear group over \( \mathbb{R} \). 
Proposition~\ref{lem:selberg-malcev-precise} then implies that $\pi_1(M)$ is residually finite. 
The conclusion now follows from Theorem~\ref{thm:geom-moduli}. 
\end{proof}

\section*{Declarations}

\subsection*{Funding}
This work was partially supported by Fundação de Amparo \`a Pesquisa do Estado de São Paulo (FAPESP), Conselho Nacional de Desenvolvimento Científico e Tecnológico (CNPq), Natural Sciences and Engineering Research Council of Canada (NSERC), Natural Sciences Foundation (NSF), Hong Kong Special Administrative Region General Research Fund (HKSAR CRF), and Simons Foundation. 
J.H.A. was supported by FAPESP \#2020/07566-3, \#2021/15139-0, and \#2023/15567-8 and CNPq \#409764/2023-0, \#443594/2023-6, \#441922/2023-6, and \#306014/2025-4. 
J.S.C. was partially supported by a Simons Foundation Collaboration Grant for Mathematicians and by the National Science Foundation under Award \#DMS-2505606.
P.P. was supported by FAPESP \#2022/16097-2, and CNPq \#313773/2021-1 and \#441922/2023-6. 
J.W. was supported by NSERC \#RGPIN-2018-03773 and  HKSAR-GRF General Research Grant \#14309824.

\subsection*{Conflict of interest}
The authors have no relevant financial or non-financial interests to disclose.

\subsection*{Data availability}
Data sharing is not applicable as no datasets were generated or analyzed during the study.

\subsection*{Ethics approval}
Not applicable.

\bibliography{references,bib}
\bibliographystyle{abbrv}

\end{document}